\theoremstyle{definition}
\newtheorem{theorem}{Theorem}[section]
\newtheorem{definition}[theorem]{Definition}
\newtheorem{lemma}[theorem]{Lemma}
\newcommand\G{{\mathcal{G}}}
\DeclareMathOperator{\Aut}{Aut}
\DeclareMathOperator{\st}{st}
\newcommand\F{{\mathbb{F}}}
\newcommand\N{{\mathbb{N}}}
\newcommand\Apol{{\mathcal{A}}}
\title{Portrait growth in contracting, regular branch groups}
\author{Zoran \v{S}uni\'c}
\address{Department of Mathematics \\ Hofstra University \\ Hempstead, NY 11549, USA}
\email{zoran.sunic@hofstra.edu} 
\author{Jone Uria-Albizuri}
\address{Department of Mathematics\\ University of the Basque Country, UPV/EHU\\ Leioa, Bizkaia, Spain.}
\email{jone.uria@ehu.eus}
\date{}
\thanks{J.\ Uria-Albizuri acknowledge financial support from the Spanish Government, grants MTM2011-28229-C02 and
MTM2014-53810-C2-2-P, and from the Basque Government, grants IT753-13, IT974-16 and the predoctoral grant PRE-2014-1-347.}
\subjclass[2010]{Primary 20E08}
\begin{document}

\begin{abstract}
We address a question of Grigorchuk by providing both a system of recursive formulas and an asymptotic result for the portrait growth of the first Grigorchuk group. The results are obtained through analysis of some features of the branching subgroup structure of the group. More generally, we provide recursive formulas for the portrait growth of any finitely generated, contracting, regular branch group, based on the coset decomposition of the groups that are higher in the branching subgroup structure in terms of the lower subgroups. Using the same general approach we fully describe the portrait growth for all non-symmetric GGS-groups and for the Apollonian group. 
\end{abstract}

\maketitle


\section{Introduction}

Regular branch groups acting on rooted trees have been extensively studied since the 1980s. The interest in these groups is due to their remarkable properties. For instance, some of the groups in this class provide counterexamples to the General Burnside Problem, many of them are amenable but not elementary amenable, and the first example of a group of intermediate word growth was of this type. The initial examples, constructed in the early 1980s, were the Grigorchuk 2-groups~\cite{grigorchuk:burnside,grigorchuk:gdegrees} and the Gupta-Sidki $p$-groups~\cite{gupta-sidki:burnside}. Many other examples and different generalizations have been introduced since then. 

The notion of word growth for a finitely generated group was introduced by A.S.~{\v S}varc~\cite{svarc:volume} and later, independently, by J.~Milnor~\cite{milnor:note,milnor:solvable}. Given a finitely generated group, one can define the word metric with respect to a finite generating set. It is natural to ask what is the word growth function of a particular group, that is, what is the number of elements that can be written as words up to a given length (equivalently, what is the volume of the ball of a given radius in the word metric). A lot of work has been done in this direction. For instance, it is known that groups of polynomial growth are exactly the virtually nilpotent groups~\cite{gromov:pol}. Since the free group of rank at least 2 has exponential growth, the word growth of every finitely generated group is at most exponential. In 1968, Milnor~\cite{milnor:problem} asked if every finitely generated group has either polynomial or exponential growth, and Grigorchuk~\cite{grigorchuk:milnor,grigorchuk:gdegrees} showed in the early 1980s that groups of intermediate growth exist. In particular, the growth of the first Grigorchuk group~\cite{grigorchuk:burnside}, introduced in 1980 as an example of a Burnside 2-group, is superpolynomial, but subexponential. 

The first Grigorchuk group is a self-similar, contracting, regular branch group, and so are the Gupta-Sidki $p$-groups. The study of such groups often relies on the representation of their elements through portraits. For instance, the known estimate~\cite{bartholdi:upper} of word growth for the first Grigorchuk group is based on an  estimate of the number of portraits of elements of given length. Thus, in the context of self-similar, contracting groups acting on regular rooted trees, it is of interest to study the portrait growth of a group with respect to a finite generating set. The necessary definitions are given in Section 2, but let us provide here a rough description. Given a finitely generated group acting on a rooted tree, one can consider the action of the group on the subtrees below each vertex of the tree. If the action on such subtrees is always given by elements of the group itself, the group is called self-similar. Moreover, if the elements describing the action at the subtrees on each level are getting shorter (with respect to the word metric) with the level, with finitely many exceptions, the group is said to be contracting. Thus, in a finitely generated, contracting group $G$, there is a finite set $\mathcal M$ such that, for every element $g$ in $G$, there exists a level such that all elements describing the action of $g$ on the subtrees below that level come from the finite set $\mathcal M$. The first level where this happens is called the depth of the element $g$ (with respect to $\mathcal M$), a concept introduced by Sidki~\cite{sidki:aut3}. In this setting, the portrait growth function (growth sequence) counts the number of elements up to a given depth and one may ask what is the growth rate of such a function. This question was specifically raised by Grigorchuk~\cite{grigorchuk:unsolved} for the first Grigorchuk group. 

We provide a system of recursive formulas for the portrait growth of the first Grigorchuk group (see Theorem~\ref{t:g-rec}) and the following asymptotic result. 

\begin{theorem}
There exists a positive constant $\gamma$  such that the portrait growth sequence $\{a_n\}_{n=0}^\infty$ of the first Grigorchuk group $\G$ satisfies the inequalities
\[
 \frac{1}{4} e^{\gamma{2^n}} \leq a_n \leq 4 e^{\gamma{2^n}},
\]
for all $n \geq 0$. Moreover, $\gamma\approx 0.71$. 
\end{theorem}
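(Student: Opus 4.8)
The plan is to establish the asymptotic $a_n \asymp e^{\gamma 2^n}$ by first extracting, from the recursive formulas of Theorem~\ref{t:g-rec}, a clean multiplicative recursion for $a_n$ (or for a quantity comparable to $a_n$), and then analyzing the logarithm of that recursion. Since the Grigorchuk group acts on the binary tree, the depth-$n$ portraits decompose into portraits of the two subtrees hanging below the root, so I expect the recursion to have the rough shape $a_{n+1} \approx c\, a_n^2$ for some constant $c$ coming from the coset/branching data, with lower-order corrections. Taking logarithms converts this into $\log a_{n+1} \approx 2\log a_n + \log c$, which is the discrete analogue of the doubling that produces the $2^n$ in the exponent.

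First I would set $b_n = \log a_n$ and rewrite the recursion of Theorem~\ref{t:g-rec} so that it reads $b_{n+1} = 2 b_n + \varepsilon_n$, where $\varepsilon_n$ collects all the bounded multiplicative corrections (the constants coming from the finite coset decomposition and the finitely many "exceptional" contributions near the top of the tree). Dividing by $2^{n+1}$ gives
\[
\frac{b_{n+1}}{2^{n+1}} = \frac{b_n}{2^n} + \frac{\varepsilon_n}{2^{n+1}}.
\]
The sequence $c_n = b_n/2^n$ is then monotone-plus-summable: provided the corrections $\varepsilon_n$ are uniformly bounded (which contractivity and the finiteness of $\mathcal M$ should guarantee), the series $\sum_n \varepsilon_n/2^{n+1}$ converges absolutely, so $c_n$ converges to a finite limit $\gamma := \lim_{n\to\infty} b_n/2^n$. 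Positivity of $\gamma$ follows because $a_n$ grows at least like the portrait growth forced by a single nontrivial branching direction, giving a strictly positive lower bound on $b_n/2^n$. This identifies the constant and yields $b_n = \gamma 2^n + o(2^n)$.

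To upgrade the asymptotic $\lim b_n/2^n = \gamma$ to the explicit two-sided inequalities $\tfrac14 e^{\gamma 2^n} \le a_n \le 4 e^{\gamma 2^n}$, I would control the tail of the telescoping sum quantitatively rather than merely asserting convergence. Writing $c_n - \gamma = -\sum_{k\ge n} \varepsilon_k/2^{k+1}$ and bounding each $\varepsilon_k$ in absolute value by the single worst-case correction, the geometric tail is dominated by a constant over $2^n$, so $|b_n - \gamma 2^n| = |c_n-\gamma|\,2^n \le \log 4$ once the uniform bound on $\varepsilon_k$ is calibrated so that the summed tail never exceeds $\log 4$. Exponentiating $|b_n - \gamma 2^n| \le \log 4$ gives exactly the stated bounds. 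The numerical value $\gamma \approx 0.71$ would then be obtained by computing $a_n$ from the recursion of Theorem~\ref{t:g-rec} for moderately large $n$ and reading off $b_n/2^n$, whose convergence is geometric and hence fast.

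The main obstacle I anticipate is not the limiting argument but the bookkeeping required to put the recursion of Theorem~\ref{t:g-rec} into the clean form $b_{n+1} = 2b_n + O(1)$. The honest recursion is almost certainly a \emph{system} (counting portraits by the coset of the element in the branching subgroups, as the abstract advertises), so the quantities being doubled are vectors, and one must show that the off-diagonal mixing and the exceptional top-level terms contribute only a bounded perturbation to $\log a_n$. Verifying that the perturbation is genuinely $O(1)$ — uniformly in $n$, with a constant small enough to give the sharp factor $4$ rather than some larger constant — is where the contractivity hypothesis and the specific structure of $\G$ must be used carefully; I would isolate this as the technical heart of the proof and handle the limit and the explicit constants as corollaries.
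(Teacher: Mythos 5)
Your limiting machinery is essentially the paper's Lemma~\ref{l:criterion}: writing $b_n=\ln a_n$, telescoping $b_n/2^n$, and bounding the geometric tail by $\log 4$ is exactly how that lemma converts a two-sided bound $A a_n^2 \le a_{n+1}\le B a_n^2$ into $e^{-M}e^{\gamma 2^n}\le a_n\le e^{M}e^{\gamma 2^n}$ with $M=\max\{|\ln A|,|\ln B|\}$. So the analytic half of your plan is sound and matches the paper. The genuine gap is in the input to that machinery: you assert that the corrections $\varepsilon_n=b_{n+1}-2b_n$ are uniformly bounded \emph{below}, i.e.\ that $a_{n+1}\ge A a_n^2$ for some fixed $A>0$, ``which contractivity and the finiteness of $\mathcal M$ should guarantee.'' They do not. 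The upper bound is the cheap direction (the paper's Lemma~\ref{upper_bound}: a depth-$(n+1)$ portrait is a root label from $\G/\st_\G(1)$ together with two depth-$\le n$ sections, giving $a_{n+1}\le 2a_n^2$). But the lower bound fails as a soft counting argument, because you cannot glue two arbitrary depth-$\le n$ portraits under a root label and expect to land in $\G$: the sections of an element of $\st_\G(1)$ are constrained by the branching structure ($\psi^{-1}(K\times K)$ has index $4$ in $K$, and only certain combinations of the $16$ cosets of $K$ occur as section pairs, as the coset table in the proof of Theorem~\ref{t:g-rec} shows). So the number of admissible pairs is a priori a small and possibly degenerate fraction of $a_n^2$, and no general contraction principle rules out that fraction decaying with $n$.

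The paper closes exactly this gap with a specific algebraic fact, not a generality: expressing $a_{n+1}$ and $a_n^2$ through the six-variable system of Theorem~\ref{t:g-rec}, one gets the identity
\[
 a_{n+1}-\tfrac{1}{4}a_n^2=(x_n-z_n+X_n-Z_n)^2\ \ge\ 0,
\]
a perfect square emerging from the particular quadratic recursions, which yields $A=\frac14$ and hence $M=\max\{|\ln \tfrac14|,|\ln 2|\}=\ln 4$ and the stated constants $\frac14$ and $4$. Your proposal flags this as ``the technical heart'' but defers it, and your suggestion to ``calibrate'' the uniform bound on $\varepsilon_n$ so the tail never exceeds $\log 4$ is circular: the factor $4$ in the theorem is an output of the specific values $A=\frac14$, $B=2$, not a tunable parameter. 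Without producing the lower-bound inequality (or some substitute, e.g.\ an explicit injection of a constant fraction of section pairs into depth-$(n+1)$ portraits), the argument establishes at best the soft statement $a_n\le \beta e^{\gamma 2^n}$ for the $\gamma$ defined by your limit, with no matching lower bound and no control on the constants.
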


We note here that the recursive formulas from Theorem~\ref{t:g-rec}, together with the results of Lemma~\ref{l:criterion}, may be used to estimate $\gamma$ with any desired degree of accuracy. 

More generally, we provide recursive formulas (see~\eqref{eq:rec}) for the portrait growth sequence of any finitely generated, contracting, regular branch group. The formulas are directly based on the branching subgroup structure of the group.

As a further application, we also study the portrait growth for all non-symmetric GGS-groups and for the Appollonian group and, in each case, we resolve the obtained recursion and provide a straightforward description of the portrait growth. 

\begin{theorem}
Let $G$ be a GGS-group defined by a non-symmetric vector $e\in \F_p^{p-1}$. The portrait growth sequence of $G$ $\{a_n\}_{n=0}^{\infty}$ is given by

\begin{align*}
a_0&=1+2(p-1)\\
a_n&=p(x_1+(p-1)y_1)^{p^{n-1}},
\end{align*}
where $x_1$ and $y_1$ are the number of solutions in $\F_p$ of 
\[
 ((n_0, \dots, n_{p-1})C(0,e)) \odot (n_0, \dots, n_{p-1})=(0,\dots,0),
\]
with $n_0+\dots+n_{p-1}= 0$ and $n_0+\dots+n_{p-1}=1$, respectively.
\end{theorem}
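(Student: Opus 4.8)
The plan is to compute the portrait growth sequence by directly analyzing the depth-$n$ portraits of elements of a GGS-group $G$ acting on the $p$-regular rooted tree. The portrait growth sequence $\{a_n\}$ counts the number of distinct portraits truncated at depth $n$, i.e.\ the number of elements up to depth $n$ with respect to the nucleus $\mathcal M$. Since $G$ is a finitely generated, contracting, regular branch group, I would invoke the general recursive formulas~\eqref{eq:rec} established earlier: these express $a_n$ in terms of the coset decomposition of the relevant branching subgroups (the stabilizer of the first level and the branch subgroup $K$) at one lower level. The whole point of the theorem is that for a non-symmetric GGS-group this recursion collapses into a clean closed form, so the task is to (i) identify the branching data explicitly and (ii) resolve the resulting recursion.

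First I would fix notation for the GGS-group defined by the vector $e=(e_1,\dots,e_{p-1})\in\F_p^{p-1}$, with the standard generators $a$ (the rooted $p$-cycle) and $b$ (defined by $b=(a^{e_1},\dots,a^{e_{p-1}},b)$). Non-symmetry of $e$ guarantees (by the known structure theory for GGS-groups, which I may assume as stated) that $G$ is a regular branch group over a suitable subgroup, so the recursion of~\eqref{eq:rec} applies. The base case $a_0=1+2(p-1)$ should just be the count of distinct depth-$0$ portraits, i.e.\ the size of the relevant finite set of sections at the top level, which I would verify directly from the generating set. For the inductive step, the coset decomposition that feeds~\eqref{eq:rec} factors each relevant group as a product over the $p$ subtrees of lower branching subgroups, and counting portraits amounts to counting the independent choices of section on each of the $p^{n-1}$ vertices at level $n-1$, subject to the constraints imposed by membership in the group.

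The heart of the argument — and the step I expect to be the main obstacle — is translating those membership constraints into the linear-algebraic count involving the circulant-type matrix $C(0,e)$ and the componentwise (Hadamard) product $\odot$. Concretely, whether a prescribed assignment of sections $a^{n_0},\dots,a^{n_{p-1}}$ on the $p$ children of a vertex can be extended to (or comes from) an actual group element is governed by a system over $\F_p$; the displayed equation $((n_0,\dots,n_{p-1})C(0,e))\odot(n_0,\dots,n_{p-1})=(0,\dots,0)$ encodes precisely this compatibility, with the two cases $\sum n_i=0$ and $\sum n_i=1$ distinguishing whether the element lies in the first-level stabilizer (contributing to the branch subgroup growth) or represents a nontrivial top rotation. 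I would prove that the number of admissible local configurations at a stabilizing vertex is exactly $x_1$ and at a non-stabilizing vertex exactly $y_1$, where $x_1,y_1$ are the solution counts in the statement. Establishing this equivalence is where the specific non-symmetric GGS structure must be used carefully, since it is what forces the constraint to be this bilinear condition rather than something more complicated.

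Once this local count is in hand, the recursion resolves multiplicatively: each of the $p^{n-1}$ vertices at level $n-1$ contributes independently a factor equal to the number of admissible local configurations, giving the base $x_1+(p-1)y_1$ (one stabilizing choice weighted against the $p-1$ rotated ones) raised to the power $p^{n-1}$, with the overall factor of $p$ accounting for the choice of top-level action. I would then assemble these pieces, check the $n=1$ case against the recursion directly to pin down the constants, and conclude the closed form $a_n=p(x_1+(p-1)y_1)^{p^{n-1}}$. The routine verifications — that $\odot$ and $C(0,e)$ are set up with the right conventions and that no overcounting occurs across levels — I would relegate to straightforward computation, reserving the detailed argument for the equivalence between extendability of portraits and the bilinear equation over $\F_p$.
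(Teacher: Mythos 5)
Your skeleton matches the paper's proof: cite Fern\'andez-Alcober--Zugadi-Reizabal to get that a non-symmetric GGS-group is regular branch over $G'$ (of index $p^2$), take the transversal $\{a^ib^j\}$, apply the general recursion~\eqref{eq:rec}, and encode the first-level coset decomposition by the circulant relation $(i_0,\dots,i_{p-1})=(n_0,\dots,n_{p-1})C(0,e)$. But your interpretation of the two solution counts is wrong, and this error propagates. The top-level permutation of any element of $a^ib^jG'$ is determined by $i$ alone, so the dichotomy ``lies in the first-level stabilizer vs.\ nontrivial top rotation'' is governed by $i$; the parameter $j=n_0+\cdots+n_{p-1}$ appearing in the displayed equation is instead the $b$-exponent modulo $G'$. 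In the paper, $x_1=p_1(0)$ and $y_1=p_1(1)$ count depth-$\leq 1$ elements in the cosets $a^iG'$ and $a^ibG'$ respectively, and the counts $p_n(i,j)$ turn out to be independent of $i$ (this is where the outer factor $p$ comes from), so the rotation part drops out of the bookkeeping entirely. Your reading of $x_1+(p-1)y_1$ as ``one stabilizing choice weighted against the $p-1$ rotated ones'' is therefore not a correct description of any quantity in the argument.

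The more serious gap is the final assembly. For a fixed coset $a^ib^jG'$, the first-level sections are \emph{not} independent: the tuple $(n_0,\dots,n_{p-1})$ is constrained by $n_0+\cdots+n_{p-1}=j$, with the $a$-parts forced via $C(0,e)$, so the recursion reads $p_{n+1}(j)=\sum_{n_0+\cdots+n_{p-1}=j}\prod_{r}p_n(n_r)$ --- a constrained sum, not a product of per-vertex factors. Asserting that each of the $p^{n-1}$ vertices ``contributes independently a factor $x_1+(p-1)y_1$'' is false as a step, and checking $n=1$ cannot repair an invalid induction. The paper closes the recursion by two ingredients you omit: (i) the scaling bijection $(n_0,\dots,n_{p-1})\mapsto(jn_0,\dots,jn_{p-1})$, proving $p_n(j)=p_n(1)=y_n$ for all $j\neq 0$ --- indispensable even to express $a_1$ through just two numbers $x_1,y_1$; and (ii) an explicit resolution (via the counts $z_\ell,z_\ell'$ of zero-free solutions of $n_1'+\cdots+n_\ell'=0$ or $1$) culminating in the identity $x_{n+1}+(p-1)y_{n+1}=(x_n+(p-1)y_n)^p$. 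There is in fact a shortcut realizing the decoupling your intuition gropes for: summing the recursion over all $j\in\F_p$ removes the constraint, giving $\sum_j p_{n+1}(j)=\bigl(\sum_m p_n(m)\bigr)^p$ and hence $a_{n+1}=p\left(a_n/p\right)^p$ for $n\geq 1$; but this decoupling operates only \emph{after} summing over the $b$-cosets, not vertex-by-vertex within a fixed coset, and it still requires ingredient (i) at level $1$ to identify the base as $x_1+(p-1)y_1$. As written, your proposal is missing the mechanism that makes the closed form true.
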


For instance, for the so called Gupta-Sidki 3-group, which corresponds to the GGS-group defined by $e=(1,-1)$ with $p=3$, the portrait growth sequence is given by $a_0 = 5$ and $a_n=3\cdot 9^{3^{n-1}}$, for $n \geq 1$. 

\begin{theorem}
The portrait growth sequence $\{a_n\}_{n=0}^{\infty}$ of the Apollonian group is given by
\begin{align*}
a_{n}=3^{\frac{3^n-1}{2}} \cdot 7^{3^n} = \frac{1}{\sqrt{3}}\left( 7 \sqrt{3} \right)^{3^n}.
\end{align*}
\end{theorem}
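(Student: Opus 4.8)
The plan is to apply the general recursive formula~\eqref{eq:rec} to the Apollonian group $\Apol$, which acts on the ternary rooted tree and is, as noted above, finitely generated, contracting, and regular branch. The starting observation is that, for $n\ge 1$, an element $g$ has depth at most $n$ precisely when each of its three first-level sections has depth at most $n-1$: the level-$n$ sections of $g$ are exactly the level-$(n-1)$ sections of its first-level sections, so the nucleus condition at level $n$ decouples across the three subtrees. Hence $a_n$ counts the elements of $\Apol$ whose three first-level sections all lie among the $a_{n-1}$ elements of depth at most $n-1$, subject to the constraints imposed by the branching subgroup structure. The first task is therefore to pin down that structure for $\Apol$: the relevant branching subgroup, the associated coset decomposition of $\st_\Apol(1)$ inside the product of three copies of the lower subgroup, and the finite set $\mathcal{M}$ of depth-zero elements. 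I expect $|\mathcal{M}| = 7$, which accounts for the initial value $a_0 = 7$.

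With this data in hand, I would read off the specialization of~\eqref{eq:rec}. The factor $a_{n-1}^3$ in the expected recursion reflects that the three sections may be chosen independently from the set of depth-$\le(n-1)$ elements, while the remaining multiplicity is contributed by the top-level action together with the index appearing in the coset decomposition. Carrying this out, I anticipate that the system collapses to the single clean recursion
\[
 a_n = 3\,a_{n-1}^3, \qquad n\ge 1,
\]
with initial condition $a_0 = 7$. A point worth checking is the transition $n=0 \to n=1$: unlike the non-symmetric GGS case, where the nucleus forces a special first step, I expect $a_1 = 3\cdot 7^3 = 1029$ to already obey the recursion, so that no separate base case beyond $a_0$ is needed.

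It then remains to solve this recursion, which is routine. Setting $u_n = \log a_n$ linearizes it to $u_n = 3u_{n-1} + \log 3$ with $u_0 = \log 7$. A particular solution is the constant $-\tfrac12\log 3$, the homogeneous solutions are $C\cdot 3^n$, and matching $u_0$ gives $C = \log 7 + \tfrac12\log 3 = \log(7\sqrt3)$. Exponentiating,
\[
 a_n = \frac{1}{\sqrt3}\,\bigl(7\sqrt3\bigr)^{3^n} = 3^{\frac{3^n-1}{2}}\cdot 7^{3^n},
\]
which is exactly the asserted formula. Equivalently, one may simply verify the ansatz $a_n = C\lambda^{3^n}$ directly against the recursion.

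The main obstacle is not the recursion but the structural input feeding it: establishing that $\Apol$ is regular branch over the relevant subgroup, identifying the nucleus and confirming $|\mathcal{M}| = 7$, and verifying that the coset decomposition of the higher subgroup in terms of the lower one yields exactly the factor $3$ together with three freely chosen sections. This is the group-specific computation, and it is where the particular features of the Apollonian group enter; once~\eqref{eq:rec} is specialized to these data, everything else is elementary.
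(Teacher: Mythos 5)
Your overall strategy coincides with the paper's --- specialize the general recursion~\eqref{eq:rec} to $\Apol$, obtain $a_{n+1}=3a_n^3$ with $a_0=7$, and solve --- and your solution of that recursion is correct. But the heart of the proof, namely that the specialization of~\eqref{eq:rec} collapses to $a_{n+1}=3a_n^3$, is only \emph{anticipated} in your write-up, and the heuristic you offer for it does not survive scrutiny. It is not true that ``the three sections may be chosen independently from the set of depth-$\le(n-1)$ elements'' with a multiplicity factor supplied by the top: the branching subgroup is $E=H'$, the index-$2$ subgroup of even-length elements (with $E\times E\times E$ of index $12$ in $E$), and the $24$ cosets of $E\times E\times E$ listed in Table~\ref{tb:Edecomposition} constrain which cosets of $E$ the three sections may occupy. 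Writing $X_n$ and $Y_n$ for the numbers of depth-$\le n$ portraits in $1E$ and $xE$, the table yields the genuinely coupled system $X_{n+1}=3X_n^3+9X_nY_n^2$ and $Y_{n+1}=3Y_n^3+9X_n^2Y_n$: inside the coset $1E$ only section patterns with an even number of odd-length sections occur (each pattern with multiplicity exactly $3$), and inside $xE$ only those with an odd number. Independence fails coset by coset; the clean recursion appears only upon summing, via the identity $3X^3+9XY^2+3Y^3+9X^2Y=3(X+Y)^3$, giving $a_{n+1}=X_{n+1}+Y_{n+1}=3(X_n+Y_n)^3=3a_n^3$. Your ``free sections times $3$'' picture is an after-the-fact reading of this collapsed identity, not a step one can take before doing the computation.

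The remaining structural inputs you flag but do not supply are exactly where the paper does its work: $\Apol$ is regular branch over $E$ because $E=H'$ for the Hanoi Towers group $H$, with $H'\le\Apol\le H$ and the indices $|H:H'|=8$, $|H':H'\times H'\times H'|=12$ known from~\cite{grigorchuk-s:standrews}; the nucleus is $\{1,x^{\pm1},y^{\pm1},z^{\pm1}\}$, giving $a_0=7$ split as $X_0=1$, $Y_0=6$; and the $12$-element transversal of $E\times E\times E$ in $E$ must be decomposed explicitly (the $24$ rows of Table~\ref{tb:Edecomposition}) to extract the coefficients $3$ and $9$ above. Since you yourself identify this as ``the main obstacle,'' your proposal is an accurate plan rather than a proof: the missing content is precisely the table and the two-variable bookkeeping. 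One small point does come out in your favor once the system is in hand: the recursion holds already from $n=0$ (indeed $X_1=3+9\cdot 36=327$, $Y_1=648+54=702$, so $a_1=1029=3\cdot 7^3$), so, as you hoped, no special first step is needed.
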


Note that, in all three cases, the portrait growth is doubly exponential even though the word growth function is intermediate for the first Grigorchuk group, mostly unknown for the GGS-groups, and exponential for the Apollonian group. We conjecture that the portrait growth is doubly exponential for all finitely generated, contracting, regular branch groups. 

The paper is organized as follows: in Section 2 we provide the basic definitions regarding groups acting on regular rooted trees and we describe a procedure yielding recursive relations for the portrait growth sequence of a finitely generated, contracting, regular branch group. In Section 3 we state some useful observations about sequences of doubly exponential growth. Finally in Sections 4, 5 and 6 we describe the cases of the first Grigorchuk group, non-symmetric GGS-groups, and the Apollonian group. 


\section{Portrait growth sequence on a regular branch contracting group}

A regular rooted tree $T$ is a graph whose vertices are the words over a finite alphabet $X$, and two vertices $u$ and $v$ are joined by an edge if $v=ux$ for some $x\in X$. The empty word, denoted $\emptyset$, represents the root of the tree and the tree is called $d$-adic if $X$ consists of $d$ elements. The vertices represented by words of a fixed length constitute a level, that is, the words of length $n$ constitute the $n$th level of the tree. The ternary rooted tree based on $X=\{1,2,3\}$ is displayed in Figure~\ref{f:tree}. 

\begin{figure}[!ht]
\begin{center}
\begin{tikzpicture}[level distance=20mm]
\tikzstyle{level 1}=[sibling distance=40mm]
\tikzstyle{level 2}=[sibling distance=18mm]
\tikzstyle{level 3}=[sibling distance=7mm]
\coordinate (root)
child {child child child}
child {child child child}
child {child child child};

\draw[fill] (root) circle (2pt);
\node[left=5pt](root){$\emptyset$};

\draw[fill] (root-1) circle (2pt);
\node[left=4pt] at (root-1) {1};
\draw[fill] (root-2) circle (2pt);
\node[left=4pt] at (root-2) {2};
\draw[fill] (root-3) circle (2pt);
\node[left=4pt] at (root-3) {3};

\draw[fill] (root-1-1) circle (2pt);
\node[right=4pt] at (root-1-1) {11};
\draw[fill] (root-1-2) circle (2pt);
\node[left=4pt] at (root-1-2) {12};
\node[below] at (root-1-2) {$\vdots$};
\draw[fill] (root-1-3) circle (2pt);
\node[left=4pt] at (root-1-3) {13};
\draw[fill] (root-2-1) circle (2pt);
\node[right=4pt] at (root-2-1) {21};
\draw[fill] (root-2-2) circle (2pt);
\node[left=4pt] at (root-2-2) {22};
\node[below] at (root-2-2) {$\vdots$};
\draw[fill] (root-2-3) circle (2pt);
\node[left=4pt] at (root-2-3) {23};
\draw[fill] (root-3-1) circle (2pt);
\node[right=4pt] at (root-3-1) {31};
\draw[fill] (root-3-2) circle (2pt);
\node[left=4pt] at (root-3-2) {32};
\node[below] at (root-3-2) {$\vdots$};
\draw[fill] (root-3-3) circle (2pt);
\node[left=4pt] at (root-3-3) {33};

\end{tikzpicture}
\caption{A ternary rooted tree}
\label{f:tree}
\end{center} 
\end{figure}
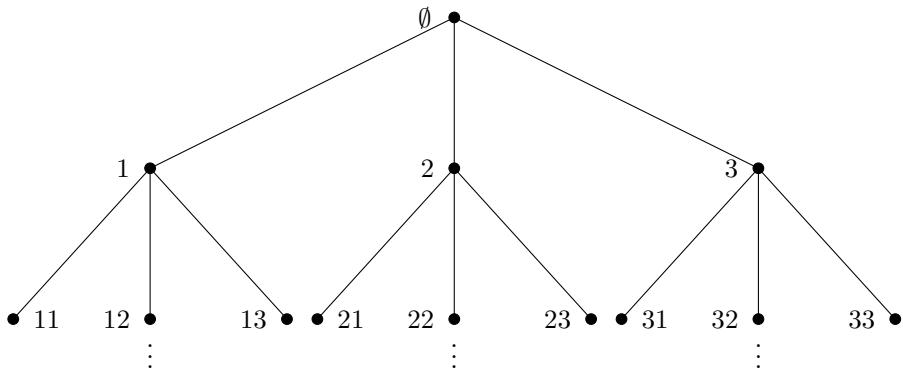

An automorphism of the tree $T$ is a permutation of the vertices preserving incidence. Such a permutation, by necessity, must also preserve the root and all other levels of the tree. The automorphisms of $T$ form the automorphism group $\Aut T$ under composition. A group acting faithfully on a regular rooted tree may be regarded as a subgroup of $\Aut T$. Every automorphism $g$ can be fully described by indicating how $g$ permutes the $d$ vertices at level 1 below the root, and how it acts on the subtrees hanging from each vertex at level 1. Observe that each subtree hanging from a vertex is isomorphic to the whole tree $T$, so that the following description makes sense. Namely, we decompose any automorphism $g\in\Aut T$ as
\begin{equation}\label{decompose}
g=(g_1,\dots,g_d)\alpha,
\end{equation}
where $\alpha\in S_d$ is the permutation describing the action of $g$ on the $d$ vertices at level 1, and each $g_i\in\Aut T$ describes the action of $g$ on the $i$th subtree below level 1, for $i=1,\dots,d$. This process can be repeated at any level, and the element describing the action of $g$ at a particular vertex $u$ is called the \textbf{section} of $g$ at the vertex $u$.
Given a group $G\leq \Aut T$ we say that $G$ is \textbf{self-similar} if the sections of each element in $G$ belong again to the group $G$. 

We say that a self-similar group $G\leq \Aut T$ is \textbf{contracting} if there is a finite set $\mathcal M$ of elements in $G$ such that, for every element $g$, there exists some level $n$ such that all sections of $g$ at the vertices at and below the $n$th level belong to $\mathcal M$. The smallest set $\mathcal M$ among such finite sets is called the \textbf{nucleus} of $G$ and we denote it by $\mathcal{N}(G)$ (or by $\mathcal N$, when $G$ is clear from the context). The definition of a contracting group, in this form, is due to Nekrashevych~\cite{nekrashevych:ss-book}. 

For any finitely generating group $G$, generated by a finite symmetric set $S=S^{-1}$, we define the word length of $g$, denoted $\partial(g)$, to be the length of the shortest word over the alphabet $S$ representing the element $g$. A self-similar group $G$ generated by a finite symmetric set $S$ is contracting if and only if there exist constants $\lambda <1$ and $C\geq 0$, and level $n$ such that for every $g\in G$ and every vertex $u$ on level $n$ 
\[ 
 \partial(g_u) \leq \lambda \partial(g) + C. 
\]
Note that, if $\partial(g)>\frac{C}{1-\lambda}$, then 
\[
 \partial(g_u) \leq \lambda \partial(g) + C < \partial(g), 
\]
that is, the sections $g_u$ are strictly shorter than $g$. Thus, the elements that possibly have no shortening in their sections are the ones that belong to the finite set 
\[ 
 \left\{g\in G\mid \partial(g)\leq \frac{C}{1-\lambda}\right\}.
\]
In particular, the nucleus is part of this finite set. The metric approach to contracting groups precedes the nucleus definition of Nekrashevych and was used by Grigorchuk in his early works on families of groups related to the first Grigorchuk group~\cite{grigorchuk:burnside,grigorchuk:gdegrees}. Nekrasevych~\cite{nekrashevych:ss-book} showed that he nucleus definition and the metric definition are equivalent in the case of finitely generated groups. 

Given a contracting group $G$ acting on a $d$-ary tree and an element $g$ in $G$, the \textbf{nucleus portrait} is a finite tree, whose interior vertices are decorated by permutations from $S_d$ and whose leaves are decorated by elements of the nucleus, describing the action of the element $g$. The portrait is constructed recursively as follows. If $g$ is an element of the nucleus the tree consist only of the root decorated by $g$. If $g$ is not an element of the nucleus, then we consider its decomposition \eqref{decompose}. The portrait of $g$ is obtained by decorating the root by the permutation $\alpha$ and by attaching the portraits of the section $g_1,\dots,g_d$ at the corresponding vertices of the first level. Since $G$ is contracting this recursive procedure must end at some point, and we obtain the portrait of the element $g$. (A concrete example in the case of the first Grigorchuk group is provided in Figure~\ref{f:portrait}.)

Let us denote by $d(g)$ the depth of the portrait of an element $g\in G$, that is, the length of the largest ray from the root to a leaf in the portrait of $g$. For each $n\in \N$, the set $\{g\in G\mid d(g)\leq n\}$ is finite, and the function $a: \N \to \N$ given by 
\[ 
 a(n) = |\{g\in G\mid d(g)\leq n\}|
\]
is called the \textbf{portrait growth function}, or portrait growth sequence, of $G$ (with respect to the nucleus). 

We now focus on regular branch groups, since their structure gives us a way to describe the portrait growth function of a contracting group in a recursive way. 

Given $G\leq\Aut T$, the elements of $G$ fixing level $n$ form a normal subgroup of $G$, called the $n$th level stabilizer and denoted $\st_G(n)$. For every $n\in\N$, we have an injective homomorphism 
\[ 
 \psi_n:\st_G(n)\longrightarrow \underbrace{\Aut T \times \dots \times\Aut T}_{d^n},
\]
sending each element $g\in\st_G(n)$ to the $d^n$-tuple consisting of the sections of $g$ at level $n$. Note that, if the group is self-similar, $\psi_n(\st_G(n))\leq G\times\overset{d^n}{\dots}\times G$. For simplicity we write $\psi=\psi_1$. A group $G\leq\Aut T$ is level transitive if it acts transitively on every level of the tree. A level transitive group $G\leq \Aut T$ is called \textbf{regular branch} if there exists a normal subgroup $K$ of finite index in $G$ such that
\[
 \psi(K\cap\st_G(1))\geq K\times\dots\times K.
\]
Observe that, since $K$ is of finite index in $G$, the group $\psi^{-1}(K\times\dots\times K)$ has finite index in $K \cap \st_G(1)$, and hence in $G$. 

We now describe a procedure yielding recursive formulas for the portrait growth of any finitely generated, contracting, regular branch group $G$, branching over its normal subgroup $K$  of index $k$. Consider a left transversal $T=\{t_1,\dots,t_{k}\}$ for $K$ in $G$ and denote by $p_n(t_i)=|\{g\in t_iK\mid d(g)\leq n\}|$ and $p_n=|\{g\in G\mid d(g)\leq n\}|$ the sizes of the sets consisting of the elements of depth at most $n$ in the coset $t_iK$ and in the whole group $G$, respectively. We have $p_n=\sum_{i=1}^{k}p_n(t_i).$  

Let $S=\{s_1,\dots,s_\ell\}$ be a left transversal for $\psi^{-1}(K\times\dots\times K)$ in $K$. For $i=1,\dots,k$ and $j = 1, \dots, \ell$ we have
\[ 
 t_is_j=(g_1,\dots,g_d)\alpha\equiv (t_{ij1},\dots,t_{ijd})\alpha\pmod{K\times\dots\times K},
\]
for some $t_{ijr}\in T$, $r=1,\dots,d$. Thus, for $n \geq 0$,  
\begin{align}\label{eq:rec}
 p_{n+1}(t_i)=\sum_{j=1}^\ell p_{n}(t_{ij1})\dots p_{n}(t_{ijd}) \\
 p_{n+1}=\sum_{i=1}^k\sum_{j=1}^\ell p_{n}(t_{ij1})\dots p_{n}(t_{ijd}).
\end{align}

The initial conditions $p_0(t_i)$, $i=1,\dots,k$, for the recursive formulas can be obtained simply by counting the members of the nucleus that come from the corresponding coset, while $p_0$ is the size of the nucleus. 

The following observation will be helpful later on. A rooted automorphism is an automorphisms whose sections, other than at the root, are trivial. 

\begin{lemma}\label{rooted-transversal}
Let $G$ be a finitely generated, contracting, regular branch group branching over the subgroup $K$ and let $T$ be a transversal of $K$ in $G$. If $a \in G$ is a rooted automorphism then, for every $t\in T$ and $n \geq 1$, we have 
\[ 
 p_n(t)=p_n(at)=p_n(ta)=p_n(t^a).
\]
\end{lemma}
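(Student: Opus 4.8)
The plan is to view $p_n$ as a function on cosets by setting $p_n(g)=|\{h\in gK\mid d(h)\le n\}|$ for any $g\in G$; this depends only on the coset $gK$ and agrees with the given definition on $T$, so the three equalities are statements about this coset function. For each of the three operations I would exhibit a bijection between the relevant cosets that preserves the condition $d(\cdot)\le n$ when $n\ge 1$. The engine of the argument is the following characterization of depth: for every $g\in G$ and every $n\ge 0$,
\[
 d(g)\le n \quad\Longleftrightarrow\quad g_u\in\mathcal N \text{ for every vertex } u \text{ at level } n.
\]

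To justify the characterization I would argue both directions, using the standard fact that the nucleus is closed under taking sections (if $h\in\mathcal N$ then every section of $h$ again lies in $\mathcal N$; this holds because a section that recurs at arbitrarily deep vertices of some element still recurs at arbitrarily deep vertices, cf.\ \cite{nekrashevych:ss-book}). For the forward direction, take $u$ at level $n$ in a portrait of depth $\le n$: either some ancestor $v$ of $u$ is a leaf, in which case $g_v\in\mathcal N$ and $g_u$ is a section of $g_v$, hence in $\mathcal N$ by closure; or the path to $u$ stays in the interior, forcing $u$ itself to be a leaf at level $n$, so $g_u\in\mathcal N$. The backward direction is immediate from the recursive construction, since once all sections at level $n$ lie in $\mathcal N$ the procedure cannot descend past level $n$.

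Next I would compute the effect of a rooted automorphism $a=(1,\dots,1)\sigma$. A direct calculation in the self-similar structure gives, for $g=(g_1,\dots,g_d)\alpha$,
\[
 ag=(g_1,\dots,g_d)(\sigma\alpha),\qquad ga=(g_{\sigma(1)},\dots,g_{\sigma(d)})(\alpha\sigma),
\]
and the analogous formula $g^a=(g_{\sigma(1)},\dots,g_{\sigma(d)})(\sigma^{-1}\alpha\sigma)$ for conjugation. In each case only the root permutation changes and the first-level sections are at worst permuted among themselves; iterating, for every $n\ge 1$ the collection of sections of $ag$, of $ga$, and of $g^a$ at level $n$ coincides, as a multiset, with the collection of sections of $g$ at level $n$. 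By the characterization, left multiplication, right multiplication, and conjugation by $a$ therefore each preserve the condition $d(\cdot)\le n$ for $n\ge 1$. Since $K\trianglelefteq G$, left multiplication by $a$ carries $tK$ bijectively onto $atK$, right multiplication onto $tKa=taK$, and conjugation onto $a^{-1}taK=t^aK$ (and $t^a=a^{-1}ta$ is also the composite of left multiplication by the rooted automorphism $a^{-1}$ with right multiplication by $a$). Each bijection thus restricts to a bijection between the depth-$\le n$ elements of the respective cosets, yielding $p_n(t)=p_n(at)=p_n(ta)=p_n(t^a)$.

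The main obstacle I anticipate lies entirely at the bottom of the recursion. The equalities genuinely fail for $n=0$: multiplying by $a$ alters the root permutation and can move an element into or out of $\mathcal N$, so $d(\cdot)$ itself is not preserved (one checks the two depths can be $0$ and $1$). What rescues the statement for $n\ge 1$ is precisely that a rooted automorphism leaves every section strictly below the root unchanged up to reindexing, so nucleus membership is only ever disturbed at level $0$. Making this delicate point rigorous—via the section characterization together with the closure of $\mathcal N$ under sections, and being careful that the portrait may terminate before level $n$ along some branches—is where the real care is required.
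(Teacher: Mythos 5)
Your proposal is correct and follows essentially the same route as the paper's proof: the paper likewise observes that for a vertex $u$ at level $n\geq 1$ one has $(ag)_u=g_{a(u)}$, $(ga)_u=g_u$, and $(g^a)_u=g_{a^{-1}(u)}$, so multiplication or conjugation by a rooted automorphism merely permutes the level-$n$ sections and hence induces depth-preserving bijections from $tK$ onto $atK$, $taK$, and $t^aK$. Your explicit justification of the criterion $d(g)\leq n$ if and only if $g_u\in\mathcal{N}$ for all $u$ at level $n$ (via section-closure of the nucleus) fills in a step the paper leaves implicit, and the only discrepancy --- your first-level formulas for $ag$ and $ga$ use the composition convention opposite to the paper's --- is harmless, since under either convention the multiset of level-$n$ sections is preserved.
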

\begin{proof}
Observe that for every $g\in tK$ and $u\in L_n$, $n\geq 1$, we have
\begin{align*}
(ag)_u&=a_ug_{a(u)}=g_{a(u)},\\
(ga)_u&=g_ua_{g(u)}=g_u,\\
(g^a)_u&=g_{a^{-1}(u)}.
\end{align*}
Thus, there are bijections between the sets of elements of depth $n$ in $tK$, $atK$, $taK$ and $t^aK$. 
\end{proof}


\section{Doubly exponential growth}

We begin by defining sequences of doubly exponential growth.

\begin{definition}
A sequence of positive real numbers $\{a_n\}_{n\in\N}$ grows doubly exponentially if there exist some positive constants $\alpha,\beta$ and some $\gamma,d>1$ such that
\[
 \alpha e^{\gamma{d^n}}\leq a_n\leq \beta e^{\gamma{d^n}},
\]
for every $n\in\N$.
\end{definition}

In order to show that the portrait growth sequences in our examples are doubly exponential we need the following auxiliary result.

\begin{lemma}\label{l:criterion}
Let $\left\{a_n\right\}_{n=0}^\infty$ be a sequence of positive real numbers and $d$ a constant with $d>1$. The following are equivalent.

\textup{(i)} There exist positive constants $A$ and $B$ such that, for all $n \geq 0$,
\[
 A a_n^d  \leq a_{n+1} \leq B a_n^d.
\]

\textup{(ii)} There exist positive constants $\alpha$, $\beta$, and
$\gamma$ such that, for all $n \geq 0$,
\[
 \alpha e^{\gamma d^n} \leq a_n \leq \beta e^{\gamma d^n}.
\]

Moreover, in case \textup{(i)} is satisfied, the sequence $\left\{\frac{\ln a_n}{d^n}\right\}_{n=0}^\infty$ is convergent, we may set
\[
 \gamma = \lim_{n \to \infty} \frac{\ln a_n}{d^n} 
\]
and $\alpha$ and $\beta$ can be chosen to be $e^{-M}$ and $e^M$,
respectively, where
\[
 M = \frac{1}{d-1} \max \{ |\ln A|, |\ln B| \}.
\]
The error of the approximation $\gamma \approx \gamma_n  = \frac{\ln a_n}{d^n} $ is no greater than $M/d^n$. 
\end{lemma}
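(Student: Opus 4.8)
The plan is to pass to logarithms, where the multiplicative inequality in (i) becomes a linear control, and then to normalize by $d^n$ so that the recursion collapses into a summable bound on consecutive increments. Concretely, I set $b_n = \ln a_n$ and $\gamma_n = b_n/d^n = (\ln a_n)/d^n$, so that condition (ii) is, after taking logarithms and dividing by $d^n$, nothing more than the assertion that $\{\gamma_n\}$ stays within a bounded window around a constant $\gamma$. The whole lemma then reduces to understanding the behavior of the auxiliary sequence $\{\gamma_n\}$.

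For the forward direction (i) $\Rightarrow$ (ii), I would first rewrite (i) as $\ln A + d\,b_n \leq b_{n+1} \leq \ln B + d\,b_n$, then divide through by $d^{n+1}$ to obtain
\[
 \frac{\ln A}{d^{n+1}} \leq \gamma_{n+1} - \gamma_n \leq \frac{\ln B}{d^{n+1}}.
\]
Writing $L = \max\{|\ln A|,|\ln B|\}$, this gives $|\gamma_{n+1}-\gamma_n| \leq L/d^{n+1}$. Since $d>1$, the series $\sum_n L/d^{n+1}$ is a convergent geometric series, so $\{\gamma_n\}$ is Cauchy and hence convergent; I set $\gamma = \lim_n \gamma_n$, which establishes the convergence claim. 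Summing the telescoping tail then yields
\[
 |\gamma - \gamma_n| \leq \sum_{k=n}^\infty |\gamma_{k+1}-\gamma_k| \leq \sum_{k=n}^\infty \frac{L}{d^{k+1}} = \frac{L}{d^n(d-1)} = \frac{M}{d^n},
\]
which is at once the asserted error bound and, after multiplying by $d^n$ and exponentiating, the two-sided estimate $e^{-M}e^{\gamma d^n} \leq a_n \leq e^M e^{\gamma d^n}$. This delivers (ii) with exactly the stated constants $\alpha = e^{-M}$, $\beta = e^M$.

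For the converse (ii) $\Rightarrow$ (i), no limiting argument is needed and I would argue directly. Using $d^{n+1}=d\cdot d^n$ I rewrite $e^{\gamma d^{n+1}} = \bigl(e^{\gamma d^n}\bigr)^d$ and then bound $e^{\gamma d^n}$ in terms of $a_n$ via (ii). For the upper bound, $a_{n+1} \leq \beta\bigl(e^{\gamma d^n}\bigr)^d \leq \beta(a_n/\alpha)^d = (\beta/\alpha^d)\,a_n^d$; symmetrically $a_{n+1} \geq \alpha\bigl(e^{\gamma d^n}\bigr)^d \geq \alpha(a_n/\beta)^d = (\alpha/\beta^d)\,a_n^d$, so (i) holds with $A=\alpha/\beta^d$ and $B=\beta/\alpha^d$. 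I expect the only genuine subtlety to be this forward direction, specifically the passage from the increment bound to the existence of the limit $\gamma$; everything else is elementary algebra once the logarithmic normalization is in place. The one decisive choice is dividing by $d^{n+1}$ rather than $d^n$, since this is precisely what makes the telescoping summable and produces the factor $1/(d-1)$ appearing in both $M$ and the error estimate.
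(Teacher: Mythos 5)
Your proof is correct and follows essentially the same route as the paper: the paper's tail series $r_n = \sum_{i=n}^\infty \frac{1}{d^{i+1}}\ln\frac{a_{i+1}}{a_i^d}$ is exactly your telescoping sum $\sum_{i\geq n}(\gamma_{i+1}-\gamma_i)$, since $\frac{1}{d^{i+1}}\ln\frac{a_{i+1}}{a_i^d} = \gamma_{i+1}-\gamma_i$, and your converse direction matches the paper's verbatim, with the same constants $A=\alpha/\beta^d$, $B=\beta/\alpha^d$, $M$, and error bound $M/d^n$.
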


\begin{proof}
(ii) implies (i). We have, for all $n$,
\[
 \frac{\alpha}{\beta^{d}} a_n^d \leq
 \frac{\alpha}{\beta^{d}}\left(\beta e^{\gamma d^n} \right)^d =
 \alpha e^{\gamma d^{n+1}} \leq
 a_{n+1} \leq \beta e^{\gamma d^{n+1}} =
 \frac{\beta}{\alpha^{d}}\left(\alpha e^{\gamma d^n} \right)^d \leq
 \frac{\beta}{\alpha^{d}} a_n^d.
\]

(i) implies (ii). For all $i$, we have $\ln A \leq \ln \frac{a_{i+1}}{a_i^d} \leq \ln
B$, and therefore
\[
\left| \ln \frac{a_{i+1}}{a_i^d} \right | \leq \max\{|\ln A|,|\ln B|\} = (d-1)M. 
\]
For $n \geq 0$, let 
\[ 
r_n = \sum_{i=n}^\infty \frac{1}{d^{i+1}} \ln \frac{a_{i+1}}{a_i^d}. 
\] 
The series $r_n$ is absolutely convergent and we have the estimate $|r_n| \leq M/d^n$, by comparison to $\sum_{i=n}^\infty \frac{1}{d^{i+1}}(d-1)M = \frac{M}{d^n}$. 

Let 
\[ 
 \gamma = \ln a_0 + r_0 =  \ln a_0 + \sum_{i=0}^\infty \frac{1}{d^{i+1}} \ln\frac{a_{i+1}}{a_i^d}. 
\]
Since $r_0$ is a convergent series, $\gamma$ is well defined. We have
\begin{align*} 
 \gamma &= \ln a_0 + r_0  = \ln a_0 + \frac{1}{d} \ln\frac{a_{1}}{a_0^d} + r_1   \\
        &= \frac{\ln a_1}{d} + r_1 = \frac{\ln a_1}{d} + \frac{1}{d^2} \ln\frac{a_{2}}{a_1^d} + r_2 \\
        &= \frac{\ln a_2}{d^2} + r_2 = \dots
\end{align*}
Thus, for all $n$, 
\[ 
 \gamma = \frac{\ln a_n}{d^n} + r_n. 
\]
Since $|r_n| \leq M/d^n$, we see that the sequence $\left\{\frac{\ln a_n}{d^n}\right\}_{n=0}^\infty$ converges to $\gamma$. Moreover, the inequalities $\gamma -M/d^n  \leq \frac{\ln a_n}{d^n} \leq \gamma + M/d^n$ yield 
\[
 e^{-M} e^{\gamma d^n} \leq a_n \leq e^M e^{\gamma d^n}. \qedhere
\]
\end{proof}

We end this section with a simple combinatorial observation that shows that an upper bound of the form required in condition (i) of the lemma always exists for regular branch groups. 

\begin{lemma}\label{upper_bound}
Let $G$ be a finitely generated, contracting, regular branch group acting on the $d$-adic tree and let $\{a_n\}_{n\geq 0}$ be the portrait growth sequence of $G$. Then
\[ 
 a_{n+1}\leq |G:\st_G(1)|a_n^d, \text{ for } n\geq 0.
\]
\end{lemma}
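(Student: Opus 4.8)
The plan is to exhibit an explicit injection from the set $\{g\in G\mid d(g)\leq n+1\}$ into a product whose cardinality is manifestly $|G:\st_G(1)|\,a_n^d$, and then simply count. First I would recall the unique decomposition $g=(g_1,\dots,g_d)\alpha$ from \eqref{decompose}: here $\alpha\in S_d$ records the action of $g$ on level $1$ and $g_1,\dots,g_d$ are the first-level sections, which lie in $G$ by self-similarity. The permutation $\alpha$ depends only on the coset $g\,\st_G(1)$, so as $g$ ranges over $G$ it takes exactly $|G:\st_G(1)|$ distinct values. Consider the map
\[
 g\longmapsto (\alpha,g_1,\dots,g_d).
\]
It is injective because the decomposition \eqref{decompose} is unique, so it suffices to show that, when $d(g)\leq n+1$, the image lies in the set of admissible level-$1$ permutations times $\{h\in G\mid d(h)\leq n\}^d$, a set of size $|G:\st_G(1)|\,a_n^d$.

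The one thing to verify is therefore the inequality $d(g_i)\leq n$ for each $i$ whenever $d(g)\leq n+1$. I would argue this directly from the recursive definition of the portrait, splitting into two cases. If $g\notin\mathcal N$, then the portrait of $g$ is obtained by decorating the root with $\alpha$ and attaching the portraits of $g_1,\dots,g_d$, so that $d(g)=1+\max_i d(g_i)$; hence $d(g)\leq n+1$ gives $d(g_i)\leq n$ at once. If $g\in\mathcal N$, then $d(g)=0$, and here I would invoke the standard fact that the nucleus is closed under taking sections (see \cite{nekrashevych:ss-book}), so that each $g_i$ again lies in $\mathcal N$ and $d(g_i)=0\leq n$. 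In either case all first-level sections of $g$ have depth at most $n$, which is exactly what the counting requires.

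The main (and essentially only) subtlety is the behaviour on the nucleus: for a non-nucleus element the depth drops by exactly one upon passing to sections, but a nucleus element is not decomposed in its portrait, so one must separately ensure its sections are still counted at level $n$. This is precisely what closure of the nucleus under sections provides. Everything else is bookkeeping, and combining the two cases yields the injection above and hence $a_{n+1}\leq |G:\st_G(1)|\,a_n^d$ for all $n\geq 0$.
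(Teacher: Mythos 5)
Your proof is correct and follows essentially the same route as the paper: count the at most $|G:\st_G(1)|$ possible root labels times the at most $a_n^d$ possible tuples of first-level portraits of depth at most $n$. Your explicit injection $g\mapsto(\alpha,g_1,\dots,g_d)$ and the careful nucleus case (via section-closedness of $\mathcal N$) merely spell out what the paper's two-sentence argument leaves implicit.
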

\begin{proof}
Since every element of depth at most $n+1$ has sections at the first level that have depth at most $n$, the number of possible decorations at level 1 and below for portraits of depth at most $n+1$ is at most $a_n^d$. On the other hand, the number of possible labels at the root is $|G:\st_G(1)|$, and we obtain the inequality.
\end{proof}


\section{Portrait growth in the first Grigorchuk group}

Denote by $\G$ the first Grigorchuk group, introduced in~\cite{grigorchuk:burnside}. In his treatise~\cite{grigorchuk:unsolved} on solved and unsolved problems centered around $\G$, Grigorchuk asked what is the growth of the sequence counting the number of portraits of given size in $\G$ (Problem~3.5). 

The first Grigorchuk group is defined as follows. 

\begin{definition}
Let $T$ be the binary tree. The first Grigorchuk group $\G$ is the group generated by the rooted automorphism $a$ permuting the two subtrees on level 1, and by $b,c,d\in\st_{\G}(1)$, where $b,c$ and $d$ are defined recursively by 
\begin{align*}
\psi(b)&=(a,c)\\
\psi(c)&=(a,d)\\
\psi(d)&=(1,b)
\end{align*} 
\end{definition}

Already in his early works in 1980s Grigorchuk observed that $\G$ is contracting with nucleus $\mathcal{N}(\G)=\{1,a,b,c,d\}$. Since $\G$ is a contracting group, its elements have well defined portraits, which are finite decorated trees. For instance, the portrait of the element $bacac$ is provided in Figure~\ref{f:portrait}.

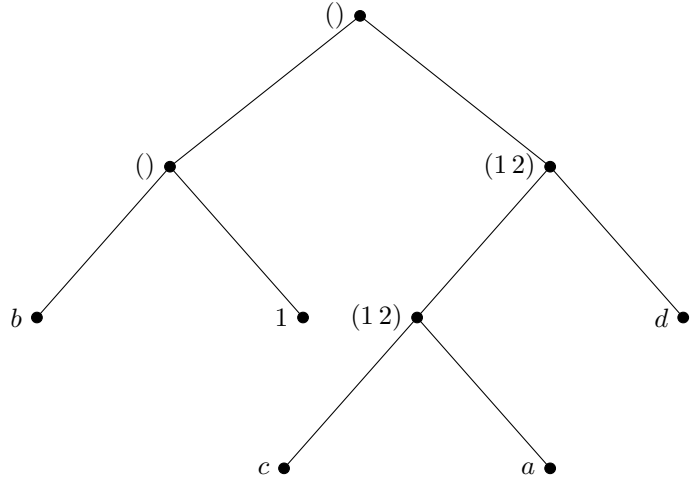
\begin{figure}[!ht]
\begin{tikzpicture}[level distance=20mm]
\tikzstyle{level 1}=[sibling distance=50mm]
\tikzstyle{level 2}=[sibling distance=35mm]
\tikzstyle{level 3}=[sibling distance=35mm]
\tikzstyle{level 4}=[sibling distance=20mm]
\coordinate (root)
child {child child}
child {child {child child} child} ; 
\draw[fill] (root) circle (2pt);
\node[left=25pt] (root){};
\draw[fill] (root-1) circle (2pt);
\node[left=2pt](root){$()$};
\node[left=2pt] at (root-1) {$()$};
\draw[fill] (root-2) circle (2pt);
\draw[fill] (root-1-1) circle (2pt);
\node[left=2pt] at (root-1-1){$b$};
\draw[fill] (root-1-2) circle (2pt);
\node[left=2pt] at (root-1-2){$1$};
\draw[fill] (root-2-1) circle (2pt);
\node[left=2pt] at (root-2-1){$(1\,2)$};
\draw[fill] (root-2-2) circle (2pt);
\node[left=2pt] at (root-2-2){$d$};
\draw[fill] (root-2-1-1) circle (2pt);
\node[left=2pt] at (root-2-1-1){$c$};
\draw[fill] (root-2-1-2) circle (2pt);
\node[left=2pt] at (root-2-1-2){$a$};
\node[left=2pt] at (root-2) {$(1\,2)$};
\node[right=15pt] at (root-2) {};
\end{tikzpicture}
\caption{portrait of the element $bacac$}
\label{f:portrait}
\end{figure}

Grigorchuk also showed that $\G$ is a regular branch group, branching over the subgroup $K= \langle [a,b] \rangle^\G $ of index $|\G:K|=16$. An accessible account can be found in Chapter VIII of~\cite{delaharpe:ggt-book}. A transversal for $K$ in $\G$ is given by 
\[
T=\{\ 1,d,ada,dada, \ a,ad,da,dad, \ b,c,aca,cada, \ ba,ac,ca,cad \ \}.
\]
and a transversal for $\psi^{-1}(K\times K)$ in $K$ is given by 
\[
S=\{1,abab,(abab)^2,baba\}.
\]

\begin{theorem}\label{t:g-rec}
The portrait growth sequence $\{a_n\}_{n=0}^\infty$ of the first Grigorchuk group $\G$ is given recursively by
\begin{align*}
 a_0 &= 5 \\
 a_n &= 2x_n + 4y_n + 2z_n + 2X_n + 4Y_n + 2Z_n, \text{ for }n \geq 1,
\end{align*}
where $x_n$, $y_n$, $z_n$, $X_n$, $Y_n$, and $Z_n$, for $n \geq 1$, satisfy the system of recursive relations
\begin{alignat*}{2}
  &x_{n+1} &&= x_n^2 + 2y_n^2 + z_n^2, \\
  &y_{n+1} &&= x_n Y_n+ Y_n z_n + X_n y_n + y_n Z_n, \\
  &z_{n+1} &&= X_n^2 + 2Y_n^2 + Z_n^2 \\
  &X_{n+1} &&= 2x_n y_n+ 2y_n z_n, \\
  &Y_{n+1} &&= x_nX_n + 2y_nY_n + z_nZ_n, \\
  &Z_{n+1} &&= 2X_n Y_n + 2 Y_n Z_n,
\end{alignat*}
with initial conditions
\begin{align*}
 x_1 &= y_1 = z_1 = Y_1 =1, \\
 X_1 &=2, \\
 Z_1 &=0.
\end{align*}
\end{theorem}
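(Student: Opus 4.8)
The plan is to specialize the general recursion~\eqref{eq:rec} to $\G$, using the given transversal $T$ of the sixteen cosets of $K$ and the transversal $S=\{1,abab,(abab)^2,baba\}$ of $\psi^{-1}(K\times K)$ in $K$. A naive application produces sixteen coupled recursions, one for each coset, so the real content is to collapse these to the six displayed in the statement. The collapse is supplied by Lemma~\ref{rooted-transversal}: since $a$ is a rooted automorphism, $p_n(t)=p_n(at)=p_n(ta)=p_n(t^a)$ for every $t\in T$ and $n\geq 1$. Hence $p_n$ is constant on the orbits of the cosets of $K$ under the group generated by left multiplication, right multiplication, and conjugation by $a$, and it suffices to track one representative per orbit.

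First I would determine these orbits explicitly. Since $a\notin K$ and $K$ is normal in $\G$, one checks that neither left nor right multiplication by $a$ fixes any coset, so the three nontrivial operations generate a Klein four-group whose stabilizers are either trivial or equal to the conjugation; thus an orbit has size $2$ exactly when the coset is invariant under conjugation by $a$, and size $4$ otherwise. A short computation on the sixteen elements of $T$ then yields four orbits of size $2$, namely $\{1,a\}$ and three others, together with two orbits of size $4$, namely $\{d,ad,da,ada\}$ and $\{c,ac,ca,aca\}$. Writing $x_n,y_n,z_n,X_n,Y_n,Z_n$ for the common value of $p_n$ on these six orbits (so $x_n=p_n(1)$, $y_n=p_n(c)$, $Y_n=p_n(d)$, etc.), the identity $p_n=\sum_{i=1}^{16}p_n(t_i)$ becomes $a_n=2x_n+4y_n+2z_n+2X_n+4Y_n+2Z_n$, the two size-$4$ orbits accounting for the coefficients $4$ and the size-$2$ orbits for the coefficients $2$.

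Next, for one representative $t$ of each of the six orbits I would compute the four products $ts_j$ with $s_j\in S$, write each as $(g_1,g_2)\alpha$, and reduce the pair $(g_1,g_2)$ modulo $K\times K$ to a pair of transversal elements. This uses only $\psi(b)=(a,c)$, $\psi(c)=(a,d)$, $\psi(d)=(1,b)$, the fact that conjugation by $a$ swaps the two coordinates of $\psi$ on $\st_\G(1)$, and the resulting values $\psi(abab)=(ca,ac)$, $\psi(baba)=(ac,ca)$, and $\psi((abab)^2)=(caca,acac)$. Classifying each section by its orbit and substituting into~\eqref{eq:rec} then produces the six recursions; for instance, from $t=1$ the summands $s_1,\dots,s_4$ contribute $x_n^2$, $y_n^2$, $z_n^2$, and $y_n^2$, giving $x_{n+1}=x_n^2+2y_n^2+z_n^2$.

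Finally I would record the initial data. Counting nucleus members coset by coset, $p_0(t_i)=|t_iK\cap\mathcal{N}|$ equals $1$ for the five cosets of $1,a,b,c,d$ and $0$ otherwise, so $a_0=|\mathcal{N}|=5$; evaluating the depth-at-most-$1$ counts (equivalently, applying the recursion once to the $p_0$ and regrouping by orbit) gives $x_1=y_1=z_1=Y_1=1$, $X_1=2$, and $Z_1=0$. The main obstacle is the middle step: the reductions modulo $K\times K$ require explicit control of the branching subgroup $K=\langle[a,b]\rangle^\G$ and of the element $abab$ and its sections, and verifying that each of the twenty-four relevant products lands in the claimed coset is the delicate, computation-heavy core of the argument. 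The symmetry reduction of the second paragraph is precisely what keeps this bookkeeping finite and organized.
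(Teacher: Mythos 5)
Your proposal is correct and takes essentially the same route as the paper: it specializes the general recursion~\eqref{eq:rec}, uses Lemma~\ref{rooted-transversal} to collapse the sixteen cosets of $K$ to the six classes $\{1,a\}$, $\{c,ac,ca,aca\}$, $\{dada,dad\}$, $\{b,ba\}$, $\{d,ad,da,ada\}$, $\{cada,cad\}$ (matching the paper's identifications of $x_n,y_n,z_n,X_n,Y_n,Z_n$), and then reduces the twenty-four products $t_is_j$ modulo $K\times K$, which is precisely the paper's displayed table of sections. Your orbit-size argument, the resulting coefficients in $a_n=2x_n+4y_n+2z_n+2X_n+4Y_n+2Z_n$, the sample computation of $x_{n+1}$, and the initial data $a_0=5$, $x_1=y_1=z_1=Y_1=1$, $X_1=2$, $Z_1=0$ all agree with the paper's proof.
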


\begin{proof}
Denote by $p_n(t)$ the number of portraits of depth no greater than $n$ in $\Gamma$ that represent elements in the coset $tK$.

By Lemma~\ref{rooted-transversal}, we have
\begin{equation}\label{property_coset}
p_{n+1}(t)=p_{n+1}(at)=p_{n+1}(t^a)=p_{n+1}(ta), \text{ for } n\geq 0,~t\in T.
\end{equation} 
Thus we only need to exhibit recursive formulas for the 6 coset representatives in the set $\{1,c,dada,b,d,cada\}$. We have

\begin{align*}
\psi(1)&=(1,1) & \psi(abab)&=(ca,ac) \\ 
\psi((abab)^2)&=(dada,dada) & \psi(baba)&=(ac,ca)\\[3pt]
\psi(c)&=(a,d) & \psi(cabab)&=(aca,cad) \\
 \psi(c(abab)^2)&=(dad,ada) & \psi(cbaba)&=(c,ba)\\[3pt]
\psi(dada)&=(b,b) & \psi(dadaabab)&=(da,ad) \\
 \psi(dada(abab)^2)&=(cada,cada) & \psi(dadababa)&=(ad,da)\\[3pt]
\psi(b)&=(a,c) &\psi(babab)&=(aca,dad) \\
\psi(b(abab)^2)&=(dad,aca) & \psi(bbaba)&=(c,a)\\[3pt]
\psi(d)&=(1,b) & \psi(dabab)&=(ca, ad) \\
 \psi( d(abab)^2)&=(dada,cada) &\psi(dbaba)&=(ac,da)\\[3pt]
\psi(cada)&=(ba,d) & \psi(cadaabab)&=(ada,cad) \\
 \psi(cada(abab)^2)&=(cad,ada)&\psi(cadababa)&=(d,ba),
\end{align*}
where the sections are already written modulo $K$ by using representatives in $T$ (the spacing into 6 groups of 4 indicates how each of the 6 cosets of $K$ with representatives $1$, $c$, $dada$, $b$, $d$, and $cada$ splits into 4 cosets of $K \times K$). 

 Thus, for $n \geq 0$,
\begin{alignat}{2}\label{equations}
  p_{n+1}(1)& =p_n(1)^2 + 2p_n(ac)p_n(ca) + p_n(dada)^2,\nonumber \\
  p_{n+1}(c)& = p_n(a)p_n(d)+ p_n(dad)p_n(ada) + p_n(c)p_n(ba)+ p_n(aca)p_n(cad), \nonumber \\
  p_{n+1}(dada)& = p_n(b)^2 + 2p_n(ad)p_n(da) + p_n(cada)^2 \nonumber\\
  p_{n+1}(b) & = 2p_n(a)p_n(c)+ 2p_n(dad)p_n(aca), \\
  p_{n+1}(d) &= p_n(1)p_n(b) + p_n(ac)p_n(da) + p_n(ca)p_n(ad) + p_n(dada)p_n(cada), \nonumber\\
  p_{n+1}(cada) &= 2p_n(d)p_n(ba) + 2p_n(ada)p_n(cad),\nonumber
\end{alignat}
with initial conditions
\begin{align*}
 &p_0(1)=p_0(a)=p_0(b)=p_0(c)=p_0(d)=1, \\
 &p_0(t) = 0, \text{ for } t \in T \setminus \{1,a,b,c,d\}.
\end{align*}

Direct calculations, based on \eqref{equations}, give
\begin{align*}
 &p_1(b)=2 \\
 &p_1(cada)=0 \\
 &p_1(t) = 1, \text{ for } t \in \{1,c,d,dada\}.
\end{align*}

If we denote, for $n \geq 1$,
\begin{align*}
 x_n &= p_n(1) = p_n(a), \\
 y_n &= p_n(c) = p_n(ac) = p_n(aca) = p_n(ca), \\
 z_n &= p_n(dada) = p_n(dad), \\
 X_n &= p_n(b) = p_n(ba), \\
 Y_n &= p_n(d) = p_n(ad)=p_n(ada)=p_n(da), \\
 Z_n &= p_n(cada) = p_n(cad)
\end{align*}
we obtain, for $n \geq 1$,
\[
 a_n = 2x_n + 4y_n + 2z_n + 2X_n + 4Y_n + 2Z_n,
\]
where $x_n$, $y_n$, $z_n$, $X_n$, $Y_n$, and $Z_n$ satisfy the recursive relations and initial conditions as claimed, which follows from (\ref{equations}).
\end{proof}

\begin{theorem}
There exists a positive constant $\gamma$  such that the portrait growth sequence $\{a_n\}_{n=0}^\infty$ of the first Grigorchuk group $\G$ satisfies the inequalities
\[
\frac{1}{4} e^{\gamma{2^n}} \leq a_n \leq 4 e^{\gamma{2^n}},
\]
for all $n \geq 0$. Moreover, $\gamma\approx 0.71$.
\end{theorem}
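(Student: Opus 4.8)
The plan is to reduce everything to Lemma~\ref{l:criterion} applied with $d=2$, for which it suffices to produce constants $A,B>0$ with $A a_n^2 \le a_{n+1} \le B a_n^2$ for all $n\ge 0$. The lemma then yields the doubly exponential bounds with $\alpha=e^{-M}$, $\beta=e^{M}$ and $M=\max\{|\ln A|,|\ln B|\}$ (since $d-1=1$), together with the convergent approximation $\gamma=\lim_{n} \frac{\ln a_n}{2^n}$ and the error estimate $M/2^n$.

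The upper bound is immediate. Since $\G$ acts on the binary tree and $\G/\st_\G(1)\cong S_2$ has order $2$, Lemma~\ref{upper_bound} gives $a_{n+1}\le 2 a_n^2$, so I may take $B=2$, whence $|\ln B|=\ln 2$.

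The heart of the argument is the lower bound, and here I would use the explicit recursion of Theorem~\ref{t:g-rec}. Substituting the six relations for $x_{n+1},\dots,Z_{n+1}$ into $a_{n+1}=2x_{n+1}+4y_{n+1}+2z_{n+1}+2X_{n+1}+4Y_{n+1}+2Z_{n+1}$ expresses $a_{n+1}$ as an explicit quadratic form in $x_n,y_n,z_n,X_n,Y_n,Z_n$. Expanding $\tfrac14 a_n^2=\tfrac14(2x_n+4y_n+2z_n+2X_n+4Y_n+2Z_n)^2$ and subtracting, I expect all the $y_n$- and $Y_n$-contributions and most mixed terms to cancel, leaving only
\[
 a_{n+1}-\tfrac14 a_n^2 = (x_n-z_n+X_n-Z_n)^2,
\]
valid for $n\ge1$; the base case $n=0$ (where $a_0=5$ and $a_1=16$) is checked by hand. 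Since the right-hand side is a square, $a_{n+1}\ge\tfrac14 a_n^2$, so $A=\tfrac14$ and $|\ln A|=\ln 4$.

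With $A=\tfrac14$ and $B=2$ one gets $M=\max\{\ln4,\ln2\}=\ln4$, hence $\alpha=e^{-\ln4}=\tfrac14$ and $\beta=e^{\ln4}=4$, which is exactly the claimed pair of inequalities. For the numerical value, Lemma~\ref{l:criterion} guarantees $\bigl|\gamma-\tfrac{\ln a_n}{2^n}\bigr|\le \ln4/2^n$, so iterating the recursion of Theorem~\ref{t:g-rec} for a modest number of steps (enough that $\ln4/2^n$ falls below the desired precision) and evaluating $\tfrac{\ln a_n}{2^n}$ yields $\gamma\approx 0.71$. The one genuinely nonroutine step is recognizing the perfect-square identity above: the expansion produces several negative mixed terms ($x_nz_n$, $x_nZ_n$, $z_nX_n$, $X_nZ_n$) that are absent from $a_{n+1}$, and the crux is seeing that these assemble, together with the surviving diagonal terms and the $x_nX_n$, $z_nZ_n$ terms, into the single square $(x_n-z_n+X_n-Z_n)^2$. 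Once that identity is in hand, the remainder is bookkeeping.
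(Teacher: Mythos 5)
Your proposal is correct and follows essentially the same route as the paper: Lemma~\ref{upper_bound} gives $B=2$, the perfect-square identity $a_{n+1}-\tfrac14 a_n^2=(x_n-z_n+X_n-Z_n)^2$ (which I verified expands correctly from the recursion in Theorem~\ref{t:g-rec}) gives $A=\tfrac14$, and Lemma~\ref{l:criterion} with $d=2$ then yields $\alpha=\tfrac14$, $\beta=4$, and the numerical approximation $\gamma\approx 0.71$. Your explicit check of the base case $n=0$ (with $a_0=5$, $a_1=16$) is a small point of care the paper leaves implicit, since its identity is only stated for $n\geq 1$.
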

\begin{proof}
Following Lemma~\ref{l:criterion}, we first determine positive constants $A$ an $B$ such that for each $n\in \N$ we have 
\[Aa_n^2\leq a_{n+1}\leq B a_n^2.\]
By Lemma~\ref{upper_bound} we may take $B = |\G:\st_\G(1)|=2$. 

For the other inequality, we need a constant $A$ such that
\[a_{n+1}-Aa_n^2\geq 0.\]
Using Theorem~\ref{t:g-rec} we may express, for $n \geq 1$, both $a_{n+1}$ and $a_n$ in terms of $x_n,y_n,z_n,X_n,Y_n,Z_n$ and if we set $A=\frac{1}{4}$, we obtain 
\begin{align*}
a_{n+1}-Aa_n^2=(x_n-z_n+X_n-Z_n)^2 \geq 0.
\end{align*}
Since  $M = \frac{1}{d-1} \max \{ |\ln A|, |\ln B| \} = \frac{1}{2-1} \max \{ |\ln 1/4|, |\ln 2| \} = \ln 4$, we obtain $\alpha = e^{-M} = 1/4$ and $\beta = e^M=4$. Finally, the approximation of $\gamma \approx 0.71$ can be calculated by using the recursion given by Theorem~\ref{t:g-rec} and Lemma~\ref{l:criterion}.  
\end{proof} 


\section{Portrait growth in non-symmetric GGS-groups}

The GGS-groups (named after Grigorchuk, Gupta, and Sidki) form a family of groups generalizing the Gupta-Sidki examples~\cite{gupta-sidki:burnside} (which were in turn inspired by the first Grigorchuk group~\cite{grigorchuk:burnside}). 

\begin{definition}
For a prime $p$, $p \geq 3$, and a vector $e=(e_1,\dots,e_{p-1})\in\F_p^{p-1}$, the GGS-group defined by $e$ is the group of $p$-ary automorphisms generated by the rooted automorphism $a$, which permutes the subtrees on level 1 according to the permutation $(1\dots p)$, and the automorphism $b\in\st(1)$ defined recursively by $b=(b,a^{e_1},\dots,a^{e_{p-1}})$.
\end{definition}

Set $S=S^{-1}=\{a,a^2,\dots,a^{p-1},b,\dots, b^{p-1}\}$. It is easy to see that $G$ is contracting with nucleus $\mathcal{N}(G)=S\cup\{1\}$. 

Let 
\[ 
 C(0,e)=
\begin{pmatrix}
 0 & e_1 & e_2  & \dots &e_{p-1}\\
 e_{p-1} &  0   & e_1 &\dots &e_{p-2}\\
 \vdots & \vdots  &  \vdots &\ddots &\vdots\\
 e_1 & e_2 &\dots&e_{p-1} &0
\end{pmatrix}
\]
be the circulant matrix of the vector $(0,e_1,\dots,e_{p-1})$. We say that the vector $e=(e_1,\dots,e_{p-1})$ is symmetric if $e_i=e_{p-i}$, for $i=1,\dots,p-1$ (that is, the vector is symmetric precisely when the corresponding circulant matrix is symmetric). 

\begin{theorem}
Let $G$ be a GGS-group defined by a non-symmetric vector $e\in \F_p^{p-1}$. The portrait growth sequence $\{a_n\}_{n=0}^{\infty}$ of $G$  is given by

\begin{align*}
a_0&=1+2(p-1)\\
a_n&=p(x_1+(p-1)y_1)^{p^{n-1}},
\end{align*}
where $x_1$ and $y_1$ are the number of solutions in $\F_p^{p}$ of 
\[
 ((n_0, \dots, n_{p-1})C(0,e))\odot(n_0, \dots, n_{p-1})=(0,\dots,0),
\]
with $n_0+\dots+n_{p-1}= 0$ and $n_0+\dots+n_{p-1}=1$, respectively, where $\odot$ denotes the product by coordinates. 
\end{theorem}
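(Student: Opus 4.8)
The plan is to reduce the whole computation to two structural facts about the branching of $G$ over its commutator subgroup $G'$, and then to read off the circulant equation from the first level of the portraits. I begin with the rooted symmetry. Every element of $G$ lies in exactly one coset $a^k\st_G(1)$, $k=0,\dots,p-1$, and since $a$ is a rooted automorphism, Lemma~\ref{rooted-transversal} shows that for $n\ge 1$ each of these $p$ cosets contains the same number $b_n$ of elements of depth at most $n$. This gives $a_n=p\,b_n$ for $n\ge1$, while $a_0=|\mathcal N(G)|=1+2(p-1)$ is read off directly from the nucleus. The same lemma, applied to multiplication by the powers of $a$, shows more precisely that the number $N_n(\theta,\eta)$ of elements of depth at most $n$ with prescribed image $(\theta,\eta)$ in the abelianization $G/G'\cong\F_p^2$ (where $\theta$ records the exponent of $a$ and $\eta$ that of $b$) depends only on $\eta$; write $\nu_n(\eta)$ for this common value, so that $b_n=\sum_{\eta\in\F_p}\nu_n(\eta)$.

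Next I would record the branch data, using the known fact that a non-symmetric GGS-group is regular branch over $G'$, with $G'\le\st_G(1)$ and $G/G'\cong\F_p^2$ generated by the images of $a$ and $b$. From $\psi(G')\supseteq G'\times\dots\times G'$ it follows that $(G')^{\,p}\subseteq\psi(\st_G(1))$, so that a tuple $(h_0,\dots,h_{p-1})\in G^{\,p}$ lies in $\psi(\st_G(1))$ if and only if its image in $(G/G')^{\,p}$ lies in the subspace $\overline\psi:=\psi(\st_G(1))/(G')^{\,p}$. The crucial point, which is exactly where non-symmetry is used, is the shape of $\overline\psi$: since $\st_G(1)$ is generated by the conjugates $b^{a^i}$, and $b^{a^i}$ has a single section equal to $b$ (contributing $\eta=1$ at one coordinate) with all other sections powers of $a$ (contributing $\eta=0$), the $\eta$-patterns of the generators of $\overline\psi$ are the standard basis vectors of $\F_p^{\,p}$. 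Hence $\overline\psi$ has dimension $p$ and projects isomorphically onto the $\eta$-coordinates $\F_p^{\,p}$; equivalently, for every prescribed tuple of $b$-exponents $m=(n_0,\dots,n_{p-1})$ of the sections there is a unique admissible tuple of $a$-exponents, given by the circulant $m\mapsto m\,C(0,e)$.

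I would then compute $b_1$ and identify it with the circulant equation. Fixing the $\eta$-pattern of the sections to be $m$, the above forces the $\theta$-pattern to be $m\,C(0,e)$. A section can lie in the nucleus only if its abelianization $(\theta,\eta)$ satisfies $\theta\eta=0$, because $\mathcal N(G)$ maps bijectively onto $\{(\theta,\eta):\theta\eta=0\}$; conversely, once the pattern lies in $\overline\psi$ and every coordinate satisfies $\theta\eta=0$, the forced nucleus representatives form a tuple whose pattern is again in $\overline\psi$, hence an actual element of $\psi(\st_G(1))$ by the previous paragraph, and so a depth-one element of $\st_G(1)$. Thus depth-one elements of $\st_G(1)$ correspond bijectively to vectors $m$ with $(m\,C(0,e))\odot m=0$, graded by $\sum_i n_i=\eta(h)$. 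Since the equation is homogeneous of degree two, scaling $m\mapsto\lambda m$ shows the number of solutions with any fixed nonzero value of $\sum_i n_i$ is the constant $y_1$, while the value $0$ gives $x_1$; hence $\nu_1(0)=x_1$, $\nu_1(s)=y_1$ for $s\ne0$, and $b_1=x_1+(p-1)y_1$.

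Finally I would establish the recursion and solve it. An element $h\in\st_G(1)$ has depth at most $n+1$ exactly when each of its $p$ sections has depth at most $n$ and the section tuple lies in $\psi(\st_G(1))$; by the membership criterion this last condition is just that the pattern lie in $\overline\psi$. Summing over $\overline\psi$, using that the product of section-counts depends only on the $\eta$-coordinates and that $\overline\psi$ projects \emph{bijectively} onto these coordinates, the sum factors completely:
\[
 b_{n+1}=\sum_{(n_0,\dots,n_{p-1})\in\F_p^{\,p}}\ \prod_{j=0}^{p-1}\nu_n(n_j)=\Big(\sum_{\eta\in\F_p}\nu_n(\eta)\Big)^{p}=b_n^{\,p}.
\]
With $b_1=x_1+(p-1)y_1$ this yields $b_n=(x_1+(p-1)y_1)^{p^{n-1}}$ and $a_n=p\,b_n$, as claimed; this is equivalent to running \eqref{eq:rec} and checking that it collapses to a perfect $p$-th power. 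I expect the main obstacle to be the second paragraph: pinning down that $G$ branches over $G'$ and that $\overline\psi$ projects isomorphically onto the $b$-exponents of the sections. This isomorphic projection is the precise feature that fails for symmetric $e$ (where one must branch over $\gamma_3(G)$ instead), and it is what simultaneously makes the nucleus bookkeeping of the third paragraph exact and the sum of the fourth paragraph factor as a clean $p$-th power.
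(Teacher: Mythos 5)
Your proposal is correct, and it rests on the same skeleton as the paper's proof: the Fern\'andez-Alcober--Zugadi-Reizabal fact that non-symmetric GGS-groups are regular branch over $G'$, the coset bookkeeping of Section 2 specialized to $K=G'$, the circulant congruence $(i_0,\dots,i_{p-1})=(n_0,\dots,n_{p-1})C(0,e)$ for the sections, the nucleus initial count, and the scaling bijection $m\mapsto\lambda m$ exploiting the degree-two homogeneity of the equation. Where you genuinely diverge is in resolving the recursion. The paper keeps the full vector of per-coset counts $p_n(j)$, proves $p_n(1)=p_n(j)$ for $j\neq 0$ by induction, and then resolves \eqref{e:decomp} through the auxiliary quantities $z_\ell,z'_\ell$ (fixed-sum solutions with no zero coordinate), solving a two-term linear recursion and deriving explicit formulas for $x_{n+1}$ and $y_{n+1}$ before observing the collapse $x_{n+1}+(p-1)y_{n+1}=(x_n+(p-1)y_n)^p$. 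You bypass all of this: since $\overline{\psi}$ is the graph of $m\mapsto mC(0,e)$ over the \emph{whole} space $\F_p^p$ of $\eta$-patterns, summing over all patterns at once removes the fixed-sum constraint and the count factors immediately as $b_{n+1}=\bigl(\sum_{\eta}\nu_n(\eta)\bigr)^p=b_n^p$ for $n\geq 1$; neither the induction $p_n(1)=p_n(j)$ nor the $z_\ell,z'_\ell$ computation is needed. (In effect you notice that summing \eqref{e:decomp} over $j$ already telescopes; what the paper's longer route buys is the finer per-coset information $x_n,y_n$, which your argument discards.) Two points worth tightening. First, your grading claim $\eta(h)=n_0+\cdots+n_{p-1}$ implicitly uses $\psi^{-1}\bigl((G')^p\bigr)\leq G'$, i.e.\ the full branch containment $(G')^p\leq\psi(G')$, not merely $(G')^p\leq\psi(\st_G(1))$; both follow from the cited branch result, but you should say so --- or avoid the grading altogether, since $b_1=x_1+(p-1)y_1$ already follows from the total solution count together with the scaling bijection among the nonzero-sum classes. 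Second, your closing diagnosis slightly misplaces the role of non-symmetry: the isomorphic projection of $\overline{\psi}$ onto the $\eta$-coordinates (the graph description) holds for \emph{every} GGS-group, symmetric or not, since it only records the sections of the generators $b^{a^i}$ of $\st_G(1)$; what non-symmetry supplies, via the cited result, is the containment $(G')^p\leq\psi(\st_G(1))$ underlying your membership criterion, which is what turns each fiber of the pattern map into a full product of $G'$-cosets and makes both your depth-one count and your product factorization exact.
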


\begin{proof}
Fern{\'a}ndez-Alcober and Zugadi-Reizabal~\cite{alcober-zugadi:ggs} showed that a GGS-group defined by a non-symmetric vector is regular branch over $G'$, whose index in $G$ is $p^2$. A left-transversal for $G'$ in $G$ is given by

\[T=\{a^ib^j\mid i,j=0,\dots,p-1\}.\]

For each pair $(i,j)\in\{0,\dots,p-1\}^2$ denote by $p_n(i,j)$ the number of portraits of depth no greater than $n$ in the coset $a^i b^j G'$. 

We have
\[
a^ib^j\equiv a^ib^{n_0}(b^a)^{n_1}(b^{a^2})^{n_2}\dots (b^{a^{p-1}})^{n_{p-1}}\pmod{G'},
\]
where $j=n_0+\dots+n_{p-1}$ in $\F_p$.
And then,
\[a^ib^j\equiv a^i(a^{i_0}b^{n_0},\dots,a^{i_{p-1}}b^{n_{p-1}})\pmod{G'\times\dots\times G'},
\]
where $(i_0,\dots,i_{p-1})=(n_0,\dots, n_{p-1})C(0,e)$.
We obtain that,
\[
p_{n+1}(i,j)=\sum_{n_0+\dots+n_{p-1}=j}\hspace{3mm} \prod_{r=0}^{p-1}p_n(i_r,n_{r}).
\]

Observe that the decomposition of $p_{n+1}(i,j)$ does not depend on $i$, so we can set $p_{n+1}(i,j)=P_{n+1}(j)$ and we have
\begin{equation}\label{e:decomp}
p_{n+1}(j)=\sum_{n_0+\dots+n_{p-1}=j}\hspace{3mm} \prod_{r=0}^{p-1}p_n(i_r,n_{r}),
\end{equation}
and then, for $n\geq 1$, we have $a_n=p\sum_{j=0}^{p-1} p_n(j)$, where we multiply by $p$ because we have to sum for each $i=0,\dots,p-1$. 

Since the nucleus is $S \cup \{1\}$, the initial conditions are the given by 
\begin{align*}
p_0(0,0)&=p_0(i,0)=p_0(0,j)=1 \text{ for } i,j\in\{1,\dots,p-1\},\\
p_0(i,j)&=0 \text{ otherwise.}
\end{align*}
In other words, $p_0(i,j)=1$ if $ij=0$ and $p_0(i,j)=0$, otherwise. By~\eqref{e:decomp}, $p_1(0)$ is the number of solutions in $\F_p^{p}$ of 
\begin{equation}\label{e:odot}
 (n_0,\dots,n_{p-1})C(0,e)\odot(n_0,n_1,\dots,n_{p-1})=(i_0n_0,\dots,i_{p-1}n_{p-1})=(0,\dots,0),
\end{equation}
with $n_0+\dots+n_{p-1}=0$, and $p_1(j)$ is the number of solutions of the same equation, but with $n_0+\dots+n_{p-1}=j$. 

We prove by induction that $p_n(1)=p_n(j)$, for $n\geq 1$ and $j\neq 0$. Observe that, for $n=1$, if $(n_0,\dots,n_{p-1})$ is a solution of~\eqref{e:odot}, with $n_0+\dots+n_{p-1}=1$, then $(jn_0,\dots,jn_{p-1})$ is also a solution, but with $n_0+\dots+n_{p-1}=j$. Similarly, if we multiply a solution that sums up to $j$ by the multiplicative inverse $j^{-1}$ of $j$ in $\F_p$, we obtain a solution that sums up to $1$. Thus, there is a bijection between the solutions and hence $p_1(1)=p_1(j)$, for $j \neq 0$. Let us now assume that $p_n(1)=p_n(j)$, for $n\geq 1$ and $j \neq 0$, and let us prove the equality for $n+1$. By~\eqref{e:decomp} and the assumption that $n \geq 1$, we have that $p_n(i,j)=p_n(j)$ and 
\[ 
 p_{n+1}(j)=\sum_{n_0+\dots+n_{p-1}=j} \hspace{3mm} \prod_{r=0}^{p-1}p_n(n_{r}).
\]
By the inductive hypothesis we have $p_n(n_r) = p_n(j^{-1}n_r)$ (this is true regardless of whether $n_r$ is 0 or not). Thus, 
\begin{align*} 
p_{n+1}(j) &= \sum_{n_0+\dots+n_{p-1}=j} \hspace{2mm} \prod_{r=0}^{p-1}p_n(n_{r}) 
            = \sum_{n_0+\dots+n_{p-1}=j} \hspace{2mm} \prod_{r=0}^{p-1}p_n(j^{-1}n_{r}) \\
           &= \sum_{j^{-1}n_0+\dots+j^{-1}n_{p-1}=1} \hspace{2mm} \prod_{r=0}^{p-1}p_n(j^{-1}n_{r}) 
            = \sum_{n_0'+\dots+n_{p-1}'=1} \hspace{2mm} \prod_{r=0}^{p-1}p_n(n_{r}') = p_{n+1}(1).
\end{align*}
 
We now resolve the recursion~\eqref{e:decomp}. 

Denote $x_n=p_n(0)$ and $y_n=p_n(1)$, for $n \geq 1$, so that $a_n=p(x_n+(p-1)y_n)$. The fact that $p_n(n_i)= y_n$ whenever $n_i \neq 0$, together with~\eqref{e:decomp}, implies that 
\begin{align*}
x_{n+1}&=\sum_{n_0+\dots+ n_{p-1}=0}\hspace{3mm}\prod_{n_i=0}x_n\prod_{n_i\neq 0} y_n,\\
y_{n+1}&=\sum_{n_0+\dots+ n_{p-1}=1}\hspace{3mm}\prod_{n_i=0}x_n\prod_{n_i\neq 0}y_n
\end{align*}
Thus, by making all possible choices of $\ell$ coordinates, $\ell=0,\dots,p$, in $(n_0,\dots,n_{p-1})$ that are equal to 0, we obtain 
\begin{align}
x_{n+1}&=\sum_{\ell=0}^{p}x_n^{p-\ell}y_n^\ell\binom{p}{\ell}z_\ell \label{e:x}\\
y_{n+1}&=\sum_{\ell=0}^{p}x_n^{p-\ell}y_n^\ell\binom{p}{\ell}z'_\ell \label{e:y},
\end{align}
where $z_\ell$ is the number of solutions of $n_1'+\dots+n_\ell'=0$ such that none of $n_1',\dots,n_\ell'$ is 0 and $z'_\ell$ the number of solutions of $n_1'+\dots+n_\ell'=1$ such that none of $n_1',\dots,n_\ell'$ is 0. 

For $z_\ell$ and $z'_\ell$, $\ell \geq 1$, we have the relations
\begin{align*}
z_{\ell+1}&=(p-1)z'_\ell\\
z'_{\ell+1}&=z_l+(p-2)z'_\ell,
\end{align*}
with initial conditions $z_1=0$ and $z'_1=1$. The solution to this system is 
\begin{align*}
z_\ell&=\frac{1}{p}((p-1)^\ell-(-1)^{\ell-1}(p-1)),\\
z'_\ell&=\frac{1}{p}((p-1)^\ell-(-1)^\ell),
\end{align*}
which, by~\eqref{e:x} and~\eqref{e:y}, gives 
\begin{align*}
x_{n+1}&=\frac{1}{p}(x_n+(p-1)y_n)^p+\frac{p-1}{p}(x_n-y_n)^p,\\
y_{n+1}&=\frac{1}{p}(x_n+(p-1)y_n)^p-\frac{1}{p}(x_n-y_n)^p.
\end{align*}
Finally, we obtain 
\[x_{n+1}+(p-1)(y_{n+1})=(x_n+(p-1)y_n)^p,\]
and we conclude that 
\[a_n=p(x_1+(p-1)y_1)^{p^{n-1}}. \qedhere\]
\end{proof}


\section{Portrait growth in the Apollonian group}

The Apollonian group is a subgroup of the Hanoi Towers group. The Hanoi Towers group was introduced by Grigorchuk and the first author~\cite{grigorchuk-s:hanoi-crm} and the Apollonian group was introduced later by Grigorchuk, Nekrashevych and the first author~\cite{grigorchuk-n-s:oberwolfach1}. 

\begin{definition}
The Appolonian group $\Apol$ acting on the ternary tree is the group generated by the automorphisms
\begin{align*}
x&=(1,y,1)(1\,2),\\
y&=(x,1,1)(1\,3),\\
z&=(1,1,z)(2\,3).
\end{align*}
\end{definition}

Set $S=\{x,y,z,x^{-1},y^{-1},z^{-1}$. It is easy to see that $\Apol$ is contracting with nucleus $\mathcal{N}(\Apol)=S\cup\{1\}$. 

\begin{theorem}
The portrait growth sequence $\{a_n\}_{n=0}^{\infty}$ of the Apollonian group is given, for $n \geq 0$, by 
\[
a_{n}=3^{\frac{3^n-1}{2}} \cdot 7^{3^n} = \frac{1}{\sqrt{3}}\left( 7 \sqrt{3} \right)^{3^n}.
\]
\end{theorem}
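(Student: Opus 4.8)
The plan is to count the elements of $\Apol$ by their first-level sections and to show that the portrait growth satisfies the clean recursion $a_{n+1}=3a_n^3$ with $a_0=7$, which then integrates to the stated closed form. Since the nucleus is $S\cup\{1\}$ and has seven elements, $a_0=7$, matching $3^{(3^0-1)/2}7^{3^0}=7$. For the inductive step I would use that an element $g=(g_1,g_2,g_3)\alpha$ of $\Apol$ has depth at most $n+1$ precisely when each section $g_i$ has depth at most $n$; since the generators act on level~$1$ through $(1\,2),(1\,3),(2\,3)$ the image of $\Apol$ in the symmetric group is all of $S_3$, so $|\Apol:\st_{\Apol}(1)|=6$ and Lemma~\ref{upper_bound} already gives $a_{n+1}\le 6a_n^3$. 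The point is to sharpen the factor $6$ to $3$ via
\[
 a_{n+1}=\sum_{\alpha\in S_3}\bigl|\{(g_1,g_2,g_3): d(g_i)\le n \text{ and } (g_1,g_2,g_3)\alpha\in\Apol\}\bigr|.
\]

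The key structural input is a self-similar sign character. Let $\phi:\Apol\to\{\pm1\}$ send $g$ to the sign of the permutation by which $g$ acts on level~$1$; this is a homomorphism with $\phi(x)=\phi(y)=\phi(z)=-1$. I would first record the self-similarity identity $\phi(g)=\phi(g_1)\phi(g_2)\phi(g_3)$, which holds because both sides are homomorphisms $\Apol\to\{\pm1\}$ agreeing on the generators; for the right-hand side one uses the section formula $(gh)_i=g_i h_{\alpha^{-1}(i)}$ to see that $g\mapsto\phi(g_1)\phi(g_2)\phi(g_3)$ is multiplicative. Consequently a triple $(g_1,g_2,g_3)$ can be completed to an element $(g_1,g_2,g_3)\alpha\in\Apol$ only for those $\alpha$ with $\mathrm{sgn}(\alpha)=\phi(g_1)\phi(g_2)\phi(g_3)$, that is, for exactly three of the six permutations in $S_3$.

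The crux is to show that this necessary condition is also sufficient: every triple whose section signs multiply to $\mathrm{sgn}(\alpha)$ is in fact realizable with first-level permutation $\alpha$. Equivalently, $\psi(\st_{\Apol}(1))$ is exactly the index-two subgroup $\ker(\phi\cdot\phi\cdot\phi)$ of $\Apol\times\Apol\times\Apol$, and $\Apol$ contains the rooted automorphisms realizing $A_3$. This is where the regular branch structure enters: $\Apol$ is regular branch over a finite-index subgroup $K\subseteq\ker\phi$, so $\psi(K\cap\st_{\Apol}(1))\supseteq K\times K\times K$, and since $K\times K\times K\subseteq\ker(\phi\cdot\phi\cdot\phi)$, the claim reduces to filling the finite gap $\ker(\phi\cdot\phi\cdot\phi)/(K\times K\times K)$ inside $\psi(\st_{\Apol}(1))$ — a finite verification carried out with the explicit transversals exactly as in the general recursion~\eqref{eq:rec}. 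I expect this realizability statement, together with the identification of the rooted part of $\Apol$ with $A_3$, to be the main obstacle, although it is ultimately only a finite computation.

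Granting realizability, each of the $a_n^3$ triples of depth-at-most-$n$ sections is completed for precisely three permutations, so the displayed sum becomes $\sum_{\text{triples}}\#\{\alpha:(g_1,g_2,g_3)\alpha\in\Apol\}=3a_n^3$, giving $a_{n+1}=3a_n^3$ for all $n\ge0$. Finally I would integrate this recursion: setting $u_n=\ln a_n$ turns it into the linear relation $u_{n+1}=3u_n+\ln3$, whose solution with $u_0=\ln7$ is $u_n=3^n\ln7+\tfrac12(3^n-1)\ln3$; exponentiating yields
\[
 a_n=7^{3^n}\,3^{(3^n-1)/2}=\frac{1}{\sqrt3}\bigl(7\sqrt3\bigr)^{3^n},
\]
as claimed. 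Equivalently, once $a_{n+1}=3a_n^3$ and $a_0=7$ are in hand, the closed form follows by a one-line induction.
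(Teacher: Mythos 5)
Your proposal is correct, but it takes a genuinely different route from the paper's. The paper works over the branching subgroup $E$ (the even-length elements, which coincide with $H'$ and, since each generator acts at the root by a transposition, with $\ker\phi$ in your notation): it lists all $24$ cosets of $\psi^{-1}(E\times E\times E)$ in $\Apol$ together with their first-level decompositions (Table~\ref{tb:Edecomposition}), reads off the coupled recursion $X_{n+1}=3X_n^3+9X_nY_n^2$ and $Y_{n+1}=3Y_n^3+9X_n^2Y_n$ for the two cosets of $E$, and then observes that the sum collapses to $a_{n+1}=3(X_n+Y_n)^3=3a_n^3$. Your sign-character argument explains conceptually \emph{why} that collapse happens: the only constraint on a first-level datum $(g_1,g_2,g_3)\alpha$ is the parity relation $\mathrm{sgn}(\alpha)=\phi(g_1)\phi(g_2)\phi(g_3)$, so each of the $a_n^3$ admissible section triples extends in exactly three ways and the count never needs to be split by coset. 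The one step you flag and leave as a ``finite verification'' does close, and quickly: by the branch property $E\times E\times E\le\psi(\st_{\Apol}(1))$, and since $[\Apol:E]=2$ the quotient $\ker(\phi\cdot\phi\cdot\phi)/(E\times E\times E)$ is the even-weight subgroup of $(\Z/2\Z)^3$, of order $4$; it is generated by the images of $\psi(x^2)=(y,y,1)$, $\psi(y^2)=(x,1,x)$, and $\psi(z^2)=(1,z,z)$, all of which visibly lie in $\psi(\st_{\Apol}(1))$. Together with the surjectivity of the root-permutation map $\Apol\to S_3$, this gives your realizability claim (your reduction of general $\alpha$ to the stabilizer case, via any $h\in\Apol$ with root permutation $\alpha$, is sound), hence $a_{n+1}=3a_n^3$ with $a_0=7$, and the closed form follows as you say. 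The trade-off: the paper's method is the mechanical instantiation of the general recursion~\eqref{eq:rec} and requires no sharp structural statement, only enumeration; your argument is shorter and isolates the structural fact $\psi(\st_{\Apol}(1))=\ker(\phi\cdot\phi\cdot\phi)$ that makes the single clean recursion possible, at the cost of having to prove that equality rather than merely tabulate cosets.
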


\begin{proof}
Denote by $E$ the subgroup of index 2 in $\Apol$ consisting of the elements in $\Apol$ that are represented by words of even length over the alphabet $\{x^\pm,y^\pm,z^\pm\}$. A left transversal for $E$ in $\Apol$ is given by $T=\{1,x\}$. It is known~\cite{grigorchuk-s:standrews} that the Hanoi Towers group $H$ is a regular branch group over its commutator $H'$, which is of index $8$ in $H$, and that the index of $H' \times H' \times H'$ in $H'$ is $12$. The Apollonian group $\Apol$ has index $4$ in $H$ and contains the commutator $H'$. Moreover, $E=H'$, implying that $\Apol$ is a regular branch group, branching over $E$. The index of $E\times E\times E$ in $E$ is 12, and a transversal is given by 
\[ 
T' = \{1,yx,(yx)^2,x^2,y^2,z^2,x^2yx,y^3x,z^2yx,x^2(yx)^2,y^2(yx)^2,z^2(yx)^2\}.
\]

\begin{table}[!ht]
	\begin{alignat*}{6}
	&(1,1,1)      &&= 1        &&\equiv 1, \qquad\qquad\qquad 
	&&(1,y,1)(1\,2)  &&= x     &&\equiv x, \\
	& (x,y,1)(1\,3\,2)    &&= yx        &&\equiv 1, \qquad     
	&&  (y,yx,1)(2\,3)    &&= xyx       &&\equiv x, \\
	& (x,yx,y)(1\,2\,3)     &&= (yx)^2      &&\equiv 1, \qquad   
	&&(yx,yx,y)(1\,3)   &&= x(yx)^2    &&\equiv x, \\
	&(y,y,1)      &&= x^2     &&\equiv 1, \qquad  
	&&(y,y^2,1)(1\,2)   &&= x^3    &&\equiv x, \\
	&(x,1,x)   &&= y^2    &&\equiv 1, \qquad   
	&&(1,yx,x)(1\,2)   &&= xy^2    &&\equiv x, \\
	&(1,z,z)   &&= z^2    &&\equiv 1, \qquad  
	&&(z,y,z)(1\,2)   &&= xz^2    &&\equiv x,\\
	&(yx,y^2,1)(1\,3\,2)       &&= x^2yx        &&\equiv 1, \qquad\qquad\qquad 
	&&(y^2,y^2x,1)(2\,3)     &&= x^3yx     &&\equiv x, \\
	&(x^2,y,x)(1\,3\,2)     &&= y^3x        &&\equiv 1, \qquad     
	&&(y,yx^2,x)(2,3)     &&= xy^3x       &&\equiv x, \\
	&(x,zy,z)(1\,3\,2)      &&= z^2yx     &&\equiv 1, \qquad   
	&&(zy,yx,z)(2\,3)   &&= xz^2yx    &&\equiv x, \\
	&(yx,y^2x,y)(1\,2\,3)      &&= x^2(yx)^2     &&\equiv 1, \qquad  
	&&(y^2x,y^2x,y)(1\,3)   &&= x^3(yx)^2    &&\equiv x, \\
	&(x^2,yx,xy)(1\,2\,3)   &&= y^2(yx)^2   &&\equiv 1, \qquad   
	&&(yx,yx^2,xy) (1\,3)  &&= xy^2(yx)^2    &&\equiv x, \\
	&(x,zyx,zy)(1\,2\,3)   &&= z^2(yx)^2    &&\equiv 1, \qquad  
	&&(zyx,yx,zy)(1\,3)  &&= xz^2(yx)^2    &&\equiv x,\\
	\end{alignat*}
	\caption{The cosets of $E \times E\times E$ decomposing the cosets of $E$}
	\label{tb:Edecomposition}
\end{table}

Denote by $X_n$ and $Y_n$ the number of portraits of depth at most $n$ in the cosets $1E$ and $xE$ respectively. The coset decomposition provided in Table~\ref{tb:Edecomposition} implies that 
\begin{align*}
X_{n+1}&=3X_n^3+9X_nY_n^2,\\
Y_{n+1}&=3Y_n^3+9X_n^2Y_n.
\end{align*}
which then yields
\[
a_{n+1}=X_{n+1}+Y_{n+1}=3(X_n+Y_n)^3=3a_n^3.
\]
Taking into account the initial condition $a_0=7$, we obtain $a_{n}=3^{\frac{3^n-1}{2}}7^{3^n}$ by induction.
\end{proof}


\end{document}